\documentclass{article}

\usepackage{csquotes} 
\usepackage{ulem} 
\usepackage{amsmath}
\usepackage{amsthm}
\usepackage{amsfonts}
\usepackage{amssymb}
\usepackage{graphicx}
\usepackage{paralist} 
\usepackage[left=2.5cm,right=2.5cm,top=1.5cm,bottom=1cm,includeheadfoot]{geometry}
\usepackage{hyperref}
\usepackage{fancyhdr}
\usepackage{authblk}

\pagestyle{fancy}
\fancyhf{} 
\fancyhead[OC]{Topological Characteristics of Harmonic Quasiconformal Unit Disk Automorphisms}


\setlength\parindent{0pt}

\newtheorem{theorem}{Theorem}[section]
\theoremstyle{break} 

\newtheorem{lemma}[theorem]{Lemma}
\newtheorem{corollary}[theorem]{Corollary}
\newtheorem{definition}[theorem]{Definition}
\newtheorem{proposition}[theorem]{Proposition}
\newtheorem{remark}[theorem]{Remark}

\newtheorem{example}[theorem]{Example}

\newcommand\C{\mathbb C}
\newcommand\D{\mathbb D}
\newcommand\R{\mathbb R}

\newcommand\Z{\mathbb Z}
\newcommand\N{\mathbb N}

\newcommand{\HT}{ \mathfrak{H} } 
\newcommand{\PT}{ \mathfrak{P} } 
\newcommand{\HQC}{ \mathcal{H}_{qc}^+ } 
\newcommand{\dsup}{d_{\sup}} 
\newcommand\id{\operatorname{id}} 
\newcommand{\Arg}{\operatorname{Arg}} 

     \title{Topological Characteristics of Harmonic Quasiconformal Unit Disk Automorphisms in the Uniform Topology}

     \author{Florian Biersack}
     \affil{University of W\"{u}rzburg, Chair for Complex Analysis, Emil--Fischer--Strasse 40, 97074 W\"{u}rzburg, Bavaria, Germany \\ Email address: florian.biersack@mathematik.uni-wuerzburg.de}


     \date{}


     \begin{document}
     \maketitle
     \begin{abstract}
     \noindent We study the class $HQ(\mathbb{D})$, the set of harmonic quasiconformal automorphisms of the unit disk $\D$ in the complex plane, endowed with the topology of uniform convergence. Several important topological properties of this space of mappings are investigated, such as separability, compactness, path--connectedness and completeness.
     \end{abstract}

\section{Introduction}

The idea for investigating the harmonic quasiconformal automorphisms of $\D := \left\{ z \in \C \; \big{|} \; |z| < 1 \right\}$ was on the one hand inspired from a topic that has drawn much attention in recent years: The harmonic quasiconformal mappings. Initiated by Martio in 1968 (see \cite[p. 238]{Kalaj} and \cite[p. 366]{Pavlovic}), this particular class of homeomorphisms attracted much interest in the recent past, see \cite[Introduction]{BozinMateljevic}, \cite{Kalaj}, \cite{KrzyzNowak}, \cite{Pavlovic}, \cite[Section 10.3]{PavlovicBook} and the references therein, to name only a few. In particular, Kalaj and Pavlovi\'{c} worked intensively in this area and achieved numerous results, among others several characterization statements for harmonic quasiconformal automorphisms of the unit disk (see \cite[Theorem A, p. 239]{Kalaj} and Proposition \ref{PropositionPavlovicAbstractHQC} below). On the other hand, in \cite{BiersackLauf}, the authors studied the quasiconformal automorphism groups of simply connected domains in the complex plane. For this class of domains, the unit disk in $\C$ can be regarded as the reference element, not least due to the classical Riemann Mapping Theorem (RMT) and its quasiconformal counterpart, the Measurable RMT (see e.g. \cite[Mapping Theorem, p. 194]{LehtoVirtanen}). In view of these circumstances and by the Theorem of Rad\'{o}--Kneser--Choquet (see Proposition \ref{PropositionRKC}), a similarly striking result on harmonic mappings, the following discussion will focus on the special case of harmonic quasiconformal automorphisms of the unit disk in $\C$. The topology used in this paper is the uniform topology, induced by the supremum metric
\[
\dsup(f,g) := \sup \limits_{z \in \D} \left| f(z) - g(z) \right|
\]
for (bounded) mappings $f, g: \D \longrightarrow \C$.

\section{Definition and basic properties}

In \cite{BiersackLauf}, the authors studied the following space of mappings:
\begin{definition} \label{DefinitionHQD} 
Let $G \subsetneq \C$ be a bounded, simply connected domain, then
\[
Q(G) := \left\{ f: G \longrightarrow G \; \big{|} \; f \text{ is a quasiconformal mapping of } G \text{ onto } G \right\}
\]
\end{definition}

Several central topological properties of $Q(G)$ in the topology of uniform convergence induced by $\dsup$ were studied in \cite{BiersackLauf}. In the unit disk $\D$, a particularly specialized subclass of such mappings arises by demanding the additional property of harmonicity, i.e. by considering
\begin{align}
HQ(\D) := \left\{ f \in Q(\D) \, \big{|} \, f \ \operatorname{ is } \ \operatorname{harmonic} \right\}
\end{align}

That is, the mappings in $HQ(\D)$ are the harmonic quasiconformal automorphisms of $\D$. Here and henceforth, a complex--valued mapping $f = u + iv$ defined on a domain is called \textit{harmonic} if both, its real and imaginary parts, are real--valued harmonic mappings, which in turn are defined via the \textit{Laplace equation}
\[
\Delta u = \frac{\partial^2 u}{\partial x^2} + \frac{\partial^2 u}{\partial y^2} = 0,
\]
the differential polynomial $\Delta := \frac{\partial^2}{\partial x^2} + \frac{\partial^2}{\partial y^2}$ being the \textit{Laplace operator}. Harmonic mappings possess numerous important properties, such as the mean--value property and the maximum principle (\cite[p. 12]{Duren}), which are in turn deeply connected with holomorphic functions by well--known results from Complex Analysis. An immediate conclusion to be drawn is $\Sigma(\D) \subseteq HQ(\D)$, where $\Sigma(\D) := \{ f \in Q(\D) \; \big{|} \; f \text{ is conformal} \}$ denotes the subset of conformal automorphisms of $\D$. In particular, it is $\id_\D \in HQ(\D)$ and therefore $HQ(\D) \not = \emptyset$, where $\id_\D$ is the identity on $\D$. An important fact about harmonic mappings in $\C$ and their representation is given by the following result due to Rad\'{o}, Kneser and Choquet (\cite[pp. 33--34]{Duren}, \cite[pp. 154--156]{DurenSchober} and \cite[Theorem 1.1, p. 5]{PavlovicBook}):

\begin{proposition}[Rad\'{o}--Kneser--Choquet] \hspace{0.1cm} \label{PropositionRKC} \newline
Let $G \subsetneq \C$ be a convex Jordan domain and $\gamma: \partial \D \longrightarrow \partial G$ be a \textbf{weak homeomorphism}, i.e. a continuous mapping of $\partial \D$ onto $\partial G$ such that the preimage $\gamma^{-1}(\xi)$ of each $\xi \in \partial G$ is either a point or a closed subarc of $\partial \D$. Then the \textbf{harmonic extension}
\begin{align}
\PT[\gamma](z) := \frac{1}{2 \pi} \int \limits_0^{2 \pi} \frac{1 - r^2}{1 - 2r \cos(t - \phi) + r^2} \gamma(e^{it}) \, \mathrm{d}t, \; z = r e^{i \phi} \in \D,
\label{FormulaPoissonTransformationHarmonicExtension}
\end{align}
defines an injective harmonic mapping of $\D$ onto $G$; moreover, $\PT[\gamma]$ is unique. Conversely, if $G \subsetneq \C$ is a strictly convex Jordan domain and $f: \D \longrightarrow G$ is an injective harmonic mapping, then $f$ has a continuous extension to $\overline{\D}$ which defines a weak homeomorphism of $\partial \D$ onto $\partial G$. Moreover, if $f \in C(\overline{\D})$ is harmonic in $\D$, then $f |_{\D}$ can be written in the form (\ref{FormulaPoissonTransformationHarmonicExtension}).
\end{proposition}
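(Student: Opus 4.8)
The plan is to treat the three assertions in turn, beginning with the forward direction, which is the substantive one. First I would record that $F := \PT[\gamma]$ is harmonic in $\D$: the Poisson kernel $\frac{1-r^2}{1-2r\cos(t-\phi)+r^2}$ is harmonic in $z = re^{i\phi}$, and differentiation under the integral sign transfers this to $F$. Since $\gamma$ is continuous on $\partial\D$, the standard theory of the Poisson integral gives that $F$ extends continuously to $\overline{\D}$ with $F|_{\partial\D} = \gamma$. Uniqueness of the harmonic extension with these boundary values is then immediate from the maximum principle applied to the real and imaginary parts: any two functions harmonic in $\D$, continuous on $\overline\D$, and agreeing with $\gamma$ on $\partial\D$ must coincide.

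The heart of the matter is the univalence of $F$, which I would establish in two stages: local injectivity everywhere, then global injectivity by a degree argument. For local injectivity I would show that the Jacobian $J_F$ never vanishes in $\D$. Suppose $J_F(z_0)=0$; then the real differential $dF(z_0)$ has rank at most one, so there is a direction $e^{i\alpha}$ for which $z_0$ is both a zero and a critical point of the real harmonic function $\phi := \operatorname{Re}\!\big(e^{-i\alpha}(F - F(z_0))\big)$. The decisive input is the convexity of $G$ together with the weak-homeomorphism property of $\gamma$. Either the line $\{\,w : \operatorname{Re}(e^{-i\alpha}(w - F(z_0))) = 0\,\}$ misses $\partial G$, in which case the boundary values of $\phi$ are one-signed and the maximum principle forces $\phi$ to be one-signed in $\D$, contradicting $\phi(z_0)=0$; or the line meets the convex curve $\partial G$, and then the two arcs of $\partial G$ it separates pull back under $\gamma$ to exactly two arcs of $\partial\D$ on which $\phi$ is positive, respectively negative, so the boundary values of $\phi$ change sign exactly twice. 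In the latter case I would invoke the central Rad\'o--Kneser--Choquet lemma: a nonconstant real harmonic function on $\D$, continuous on $\overline\D$, whose boundary values change sign exactly twice has no critical point in $\D$. This contradicts $z_0$ being critical, so $J_F\neq 0$ throughout $\D$; by continuity and connectedness its sign is constant, and $F$ is a local homeomorphism.

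It remains to pass from local to global injectivity and to identify the image as $G$, for which the plan is a topological degree / argument-principle count. The boundary map $\gamma$ traces the Jordan curve $\partial G$ with winding number $1$ about each $w\in G$ and $0$ about each $w$ outside $\overline G$; since $J_F$ has constant sign, the local degree of $F$ at every solution of $F(z)=w$ has that same sign, so each $w\in G$ is attained exactly once and no $w\notin\overline G$ is attained. A local homeomorphism hitting each interior value exactly once is injective with image precisely $G$, completing the forward direction. For the converse, assuming $G$ strictly convex and $f:\D\to G$ an injective harmonic mapping, I would first apply Lewy's theorem to obtain $J_f\neq 0$, so $f$ is a homeomorphism of $\D$ onto $G$ and in particular proper, which forces the cluster set of $f$ along $\partial\D$ into $\partial G$. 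Combining Fatou-type boundary-limit results with the strict convexity of $\partial G$ (which prevents the boundary correspondence from collapsing an arc onto anything but a single point, and rules out oscillation) yields a continuous extension to $\overline\D$ that is a weak homeomorphism of $\partial\D$ onto $\partial G$.

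Finally, for the representation statement I would argue by uniqueness for the Dirichlet problem. Writing $f=u+iv$ with $u,v$ real harmonic and continuous on $\overline\D$, the Poisson integrals $\PT[u|_{\partial\D}]$ and $\PT[v|_{\partial\D}]$ are harmonic with the same continuous boundary data as $u$ and $v$; the maximum principle forces equality, and linearity of $\PT$ gives $f|_\D = \PT[f|_{\partial\D}]$. The step I expect to be the main obstacle is the key lemma used for local injectivity — bounding the critical points of a harmonic function by the number of sign changes of its boundary values — since the entire forward direction is leveraged off it; its proof typically proceeds via the argument principle applied to the holomorphic function $\partial_x\phi - i\,\partial_y\phi$, whose zeros are exactly the critical points of $\phi$, estimating their count by the boundary sign changes.
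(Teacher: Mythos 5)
The paper does not actually prove this proposition: it is quoted from the literature (Duren, Duren--Schober, Pavlovi\'{c}'s book), so there is no internal argument to compare against, and your outline in fact reconstructs the standard proof found in those sources --- Choquet's critical-point lemma combined with a degree count for univalence and surjectivity, Lewy's theorem plus the Hengartner--Schober boundary analysis for the converse, and Dirichlet uniqueness for the representation formula --- all of which is correct in structure. The only place where your sketch noticeably compresses a substantial argument is the converse direction: establishing the continuous extension to $\overline{\D}$ and the weak-homeomorphism property of the boundary map under strict convexity is a delicate cluster-set argument, not a routine combination of ``Fatou-type results'' with convexity, so that step would need to be expanded considerably in a full write-up.
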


For a (Jordan) domain $G \subseteq \C$, let $\mathcal{H}^\ast(\partial \D, \partial G)$ denote the set of all weak homeomorphisms of $\partial \D$ onto $\partial G$ and in the special case $G = \D$ define $\mathcal{H}^\ast(\partial \D) := \mathcal{H}^\ast(\partial \D, \partial \D)$. Consequently, let $\mathcal{H}^+(\partial \D, \partial G)$ and $\mathcal{H}^+(\partial \D)$ denote the corresponding subsets of all orientation--preserving homeomorphisms, respectively.

\begin{remark} \hspace{0.1cm} \label{RemarkPoissonTransformation} \newline
The harmonic extension $\PT[\gamma]$ defined by (\ref{FormulaPoissonTransformationHarmonicExtension}) is also called the \textbf{Poisson transformation} of $\gamma \in \mathcal{H}^\ast(\partial \D, \partial G)$, and the corresponding integral kernel
\[
\frac{1 - r^2}{1 - 2r \cos(t) + r^2}
\]
is called the \textbf{Poisson kernel}, see \cite[p. 12]{Duren} and \cite[pp. 5--6]{PavlovicBook}.
\end{remark}

From the Rad\'{o}--Kneser--Choquet Theorem \ref{PropositionRKC}, one obtains the following (see also \cite[pp. 337--338]{KrzyzNowak})

\begin{corollary} \label{CorollaryCharacterizationHQD}
\[
HQ(\D) = Q(\D) \; \bigcap \; \left\{ \PT[\gamma] \; \big{|} \; \gamma \in \mathcal{H}^+(\partial \D) \right\}
\]
\end{corollary}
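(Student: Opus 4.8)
The plan is to prove the asserted identity by establishing the two set inclusions separately; the reverse inclusion is almost immediate, while the forward inclusion carries the actual content and rests on combining the boundary theory of quasiconformal maps with the converse part of Proposition \ref{PropositionRKC}.

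For the inclusion $Q(\D) \cap \{ \PT[\gamma] \mid \gamma \in \mathcal{H}^+(\partial \D) \} \subseteq HQ(\D)$ I would argue directly from the definitions. Any mapping of the form $\PT[\gamma]$ is given by the Poisson integral (\ref{FormulaPoissonTransformationHarmonicExtension}); since the Poisson kernel is harmonic in the variable $z$, differentiation under the integral sign shows that $\PT[\gamma]$ is harmonic in $\D$ for every admissible boundary map $\gamma$. Hence, if $f$ lies in the intersection on the right, then $f \in Q(\D)$ and $f$ is harmonic, which is exactly the defining condition for membership in $HQ(\D)$.

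For the forward inclusion $HQ(\D) \subseteq Q(\D) \cap \{ \PT[\gamma] \mid \gamma \in \mathcal{H}^+(\partial \D) \}$, let $f \in HQ(\D)$. By definition $f \in Q(\D)$, so it remains to produce a boundary homeomorphism $\gamma \in \mathcal{H}^+(\partial \D)$ with $f = \PT[\gamma]$. First I would invoke the classical boundary correspondence theorem for quasiconformal automorphisms of the disk: such an $f$ extends to a homeomorphism $\overline{f}$ of $\overline{\D}$ onto $\overline{\D}$, and its restriction $\gamma := \overline{f}|_{\partial \D}$ is a homeomorphism of $\partial \D$ onto itself; since quasiconformal mappings are orientation--preserving, $\gamma$ is orientation--preserving, so $\gamma \in \mathcal{H}^+(\partial \D)$. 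In particular $f \in C(\overline{\D})$ and $f$ is harmonic in $\D$, whence the final assertion of Proposition \ref{PropositionRKC} applies and yields $f|_{\D} = \PT[\gamma]$, completing the inclusion.

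The step I expect to be the main obstacle is the verification that $\gamma$ is a genuine orientation--preserving homeomorphism of $\partial \D$, and not merely a weak homeomorphism. Applying the converse part of Proposition \ref{PropositionRKC} with the strictly convex domain $G = \D$ and the injective harmonic map $f$ already delivers a continuous extension to $\overline{\D}$ together with a weak homeomorphism of the boundary; however, this by itself does not exclude the possibility that $\gamma$ collapses a closed boundary arc to a single point, and thus fails to be injective. Ruling this out, together with orientation preservation, is precisely where the quasiconformality of $f$ is indispensable, and I would secure $\gamma \in \mathcal{H}^+(\partial \D)$ through the boundary extension theorem for quasiconformal self--maps of $\D$. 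Once this membership is established, the two remaining appeals to Proposition \ref{PropositionRKC} are routine.
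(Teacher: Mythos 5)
Your proposal is correct and follows essentially the same route the paper indicates: the reverse inclusion is immediate from the harmonicity of Poisson integrals, and the forward inclusion rests on the homeomorphic boundary extension of quasiconformal automorphisms of $\D$ (the paper cites Lehto for exactly this point in the remark following the corollary) combined with the final assertion of Proposition \ref{PropositionRKC}. You also correctly isolate the one subtlety --- that quasiconformality is what upgrades the weak boundary homeomorphism to a genuine orientation--preserving one --- which is precisely the observation the paper makes after stating the corollary.
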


Corollary \ref{CorollaryCharacterizationHQD} also makes sense when recalling that every quasiconformal automorphism of a Jordan domain admits a homeomorphic boundary extension (see \cite[p. 13]{Lehto}). In particular, the induced boundary mapping is injective, hence an element of $\mathcal{H}^+(\partial \D)$. A concrete harmonic automorphism of the unit disk is visualized in

\begin{example} \hspace{0.1cm} \label{ExampleHarmonicUnitDiskAutomorphism} \newline
For $x \in [0, 1]$, consider the piecewise--defined function
\[
\phi(x) = \begin{cases} 2x, & x \in [0, \frac{1}{3}] \\ \frac{2}{3}, & x \in [\frac{1}{3}, \frac{3}{4}] \\ \frac{4}{3}x - \frac{1}{3}, & x \in [\frac{3}{4}, 1] \end{cases}
\]
which is easily seen to map the interval $[0,1]$ continuously, but not injectively onto itself while keeping the endpoints $x = 0$ and $x = 1$ fixed. Transferring $\phi$ to the interval $[0, 2 \pi]$ by conjugating it via the mapping $x \longmapsto t = 2 \pi x$ yields a function $\widetilde \psi \in C([0, 2 \pi])$ with the same properties. Consequently, the mapping
\begin{align}
\gamma(e^{it}) = e^{i \widetilde \varphi(t)}
\label{FormulaBoundaryFunctionHarmonicExample}
\end{align}
for $e^{it} \in \partial \D$ defines a weak homeomorphism of $\partial \D$ onto itself, i.e. $\gamma \in \mathcal{H}^\ast(\partial \D)$. The corresponding harmonic extension provided by Proposition \ref{PropositionRKC} therefore yields a harmonic homeomorphism $\PT[\gamma]$ of $\D$ onto itself. Figure \ref{FigureHarmonicAutomorphismUnitDisk} shows the (approximated) mapping behaviour of this harmonic extension, visualized by concentric circles around the origin, radial rays and an Euclidean grid. However, the mapping $\PT[\gamma]$ is not quasiconformal due to the fact that its boundary function -- which equals $\gamma$ by construction -- is not injective, but this would necessarily follow.
\end{example}

\begin{figure}
\centering
\includegraphics[width=13cm]{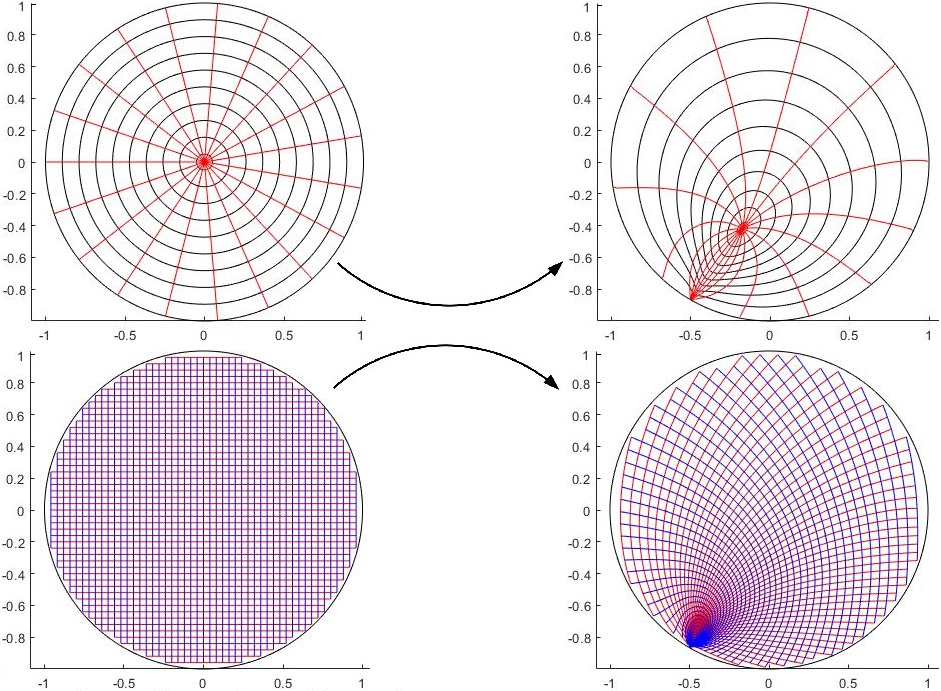}
\caption[An approximation of a certain harmonic unit disk automorphism.]{Preimage (left) and image (right) of concentric circles and radial rays (top) as well as of an Euclidean grid (bottom) in $\D$ under the harmonic extension of $\gamma$ defined by $(\ref{FormulaBoundaryFunctionHarmonicExample})$.}
\label{FigureHarmonicAutomorphismUnitDisk}
\end{figure}

\begin{remark} \hspace{0.1cm} \label{RemarkExampleNonQCHomeomorphism} \newline
In particular, the harmonic extension $\PT[\gamma]$ discussed in Example \ref{ExampleHarmonicUnitDiskAutomorphism}, with $\gamma$ given by (\ref{FormulaBoundaryFunctionHarmonicExample}), provides a concrete example of a sense--preserving homeomorphism of the unit disk that is not quasiconformal, i.e.
\[
\PT[\gamma] \in \mathcal{H}^+(\D) \backslash Q(\D).
\]
Another example of such a mapping will be presented in Proposition \ref{PropositionCounterexampleHQC}.
\end{remark}

%
%


A basic fact in the theory of harmonic mappings is that the composition of two such mappings is not necessarily harmonic again (see \cite[p. 2]{Duren}). In the same manner, the inverse mapping of an injective harmonic mapping is also not harmonic in general, except for special situations, as stated in (see \cite[Theorem, pp. 145--148]{Duren})

\begin{proposition}[Choquet--Deny] \hspace{0.1cm} \label{PropositionChoquetDeny} \newline
Suppose $f$ is an orientation--preserving injective harmonic mapping defined on a simply connected domain $G \subseteq \C$, and suppose that $f$ is neither analytic nor affine. Then the inverse mapping $f^{-1}$ is harmonic if and only if $f$ has the form
\begin{align}
\label{FormulaInverseHarmonicMapping}
f(z) = \alpha \left( \beta z + 2i \Arg( \gamma - e^{-\beta z} ) \right) + \delta,
\end{align}
where $\alpha, \beta, \gamma, \delta \in \C$ are constants with $\alpha \beta \gamma \not = 0$ and $\left| e^{-\beta z} \right| < | \gamma |$ for all $z \in G$.
\end{proposition}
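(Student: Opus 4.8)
The plan is to use the canonical decomposition of planar harmonic mappings to reduce the harmonicity of the inverse to a single equation in the analytic dilatation, and then to solve that equation explicitly. First I would write $f = h + \overline{g}$ on the simply connected domain $G$, where $h,g$ are holomorphic. Since $f$ is sense--preserving, $J_f = |h'|^2 - |g'|^2 > 0$, so the analytic dilatation $\omega := g'/h'$ is holomorphic with $|\omega| < 1$. The hypothesis that $f$ is not analytic gives $\omega \not\equiv 0$, and the case $\omega \equiv \mathrm{const}$ can be disposed of immediately (it forces $h''\equiv 0$, hence $f$ affine, which is excluded), so I may assume $\omega$ nonconstant. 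The problem is thereby reduced to determining all pairs $(h',\omega)$ for which $F := f^{-1}$ is harmonic.

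Next I would set up the inverse. Writing $w = f(z)$ and differentiating $F(f(z)) = z$, the Wirtinger derivatives of $F$ are $F_w = \overline{h'}/J_f$ and $F_{\overline w} = -\overline{g'}/J_f$. Harmonicity of $F$ is the single equation $\partial_{\overline w}F_w = 0$; transporting $\partial_{\overline w} = J_f^{-1}(-\overline{g'}\,\partial_z + h'\,\partial_{\overline z})$ back to the $z$--variable and substituting $g' = \omega h'$ reduces it, after simplification, to
\[
\overline{\omega}\,h''\,\overline{h'}\,(1-|\omega|^2) + \omega\,(h')^2\,\overline{\omega'} - \omega'\,\overline{\omega}^2\,|h'|^2 = 0. \qquad (\ast)
\]
(Equivalently, $(\ast)$ says that $h' - g'\overline{\omega}$ is annihilated by $\partial_{\overline w}$.) This reduction is mechanical, but it is the step where one must take care with the chain rule.

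The decisive step is to extract from $(\ast)$ a condition separating the holomorphic and antiholomorphic data. Since $\omega,\omega' \neq 0$ on a dense open set, I would solve $(\ast)$ linearly for $P := h'/\overline{h'}$, obtaining $P = \overline{\omega}\,Z/(\omega\,\overline{\omega'})$ with $Z := \overline{\omega}\,\omega' - A(1-|\omega|^2)$ and $A := h''/h'$. I then exploit the identity $\partial_z\log P = h''/h' = A$, valid for any nonvanishing holomorphic $h'$: inserting the formula for $P$ and computing $\partial_z\log P$ yields, with $\sigma := \log\omega$ and $B := A' - A^2 - \sigma'A$ (both holomorphic), the relation
\[
|\omega|^2\,\sigma'' = (1-|\omega|^2)\,B.
\]
Rewriting this as $|\omega|^2(\sigma'' + B) = B$ with holomorphic right--hand side shows that, unless $\sigma'' + B \equiv 0$, the nonholomorphic function $|\omega|^2 = \omega\overline{\omega}$ would equal a holomorphic one---impossible, since $\partial_{\overline z}|\omega|^2 = \omega\,\overline{\omega'} \neq 0$ wherever $\omega' \neq 0$. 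Hence $\sigma'' \equiv 0$ and $B \equiv 0$. The first forces $\omega'/\omega \equiv -\beta$ constant, i.e. $\omega(z) = c\,e^{-\beta z}$ with $\beta \neq 0$; the second is the Riccati equation $A' - A^2 + \beta A = 0$ for $A = (\log h')'$.

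Finally I would integrate. The Riccati linearizes (set $A = -(\log p)'$) to $p'' + \beta p' = 0$, giving $h' = C/(\gamma - e^{-\beta z})$; integrating yields $h(z) = \alpha\beta z + \alpha\log(\gamma - e^{-\beta z}) + \mathrm{const}$, while $g' = \omega h'$ integrates to $g(z) = -\overline{\alpha}\log(\gamma - e^{-\beta z}) + \mathrm{const}$. Assembling $f = h + \overline{g}$ and using $\log\zeta - \overline{\log\zeta} = 2i\Arg\zeta$ produces exactly the asserted form $f(z) = \alpha(\beta z + 2i\Arg(\gamma - e^{-\beta z})) + \delta$; back--substitution into $(\ast)$ then pins down the relation among the constants (so that $|c| = 1/|\gamma|$ and $\alpha\beta\gamma \neq 0$), while $|\omega|<1$ translates into the domain condition $|e^{-\beta z}| < |\gamma|$. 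For the converse I would verify directly that any mapping of this form has dilatation $\omega = c\,e^{-\beta z}$ satisfying $(\ast)$, so that its inverse is harmonic. The main obstacle is the separation step: recognizing that eliminating $h'$ via the identity $\partial_z\log(h'/\overline{h'}) = h''/h'$ converts $(\ast)$ into an equation of the shape $(\text{nonholomorphic})\times(\text{holomorphic}) = (\text{holomorphic})$, whose only nondegenerate solutions have exponential dilatation.
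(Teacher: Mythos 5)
Your argument is correct, and the paper itself offers no proof of this proposition --- it is quoted as a classical result with a citation to Duren's \emph{Harmonic Mappings in the Plane}, pp.~145--148. Your route (canonical decomposition $f=h+\overline{g}$, reduction of harmonicity of $f^{-1}$ to the equation $(\ast)$ in the dilatation $\omega=g'/h'$, separation of holomorphic and antiholomorphic data via $\partial_z\log(h'/\overline{h'})=h''/h'$ to force $\omega=c\,e^{-\beta z}$, and integration of the resulting Riccati equation) is essentially the standard proof given in that reference, and the computations you outline, including the normalization $|c|=1/|\gamma|$ that produces the $2i\Arg$ term and the domain condition $|e^{-\beta z}|<|\gamma|$, check out.
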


This result and the previously stated facts immediately imply (see also \cite[Problem 10.1, p. 311]{PavlovicBook})

\begin{theorem} \hspace{0.1cm} \newline
$HQ(\D)$ is no semigroup with respect to composition of mappings. In particular, $HQ(\D)$ is no subgroup of $Q(\D)$.
\end{theorem}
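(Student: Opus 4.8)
The plan is to establish the stronger assertion that $HQ(\D)$ fails to be closed under composition; since any subgroup of $Q(\D)$ is in particular closed under the group operation, this immediately yields that $HQ(\D)$ is neither a semigroup nor a subgroup. The first observation is that closure can break only through loss of harmonicity: if $f, g \in HQ(\D) \subseteq Q(\D)$, then $g \circ f$ is again a quasiconformal automorphism of $\D$, so $g \circ f \in Q(\D)$ always, and $g \circ f \notin HQ(\D)$ can occur only if $g \circ f$ fails to be harmonic. Thus the task reduces to exhibiting $f, g \in HQ(\D)$ with $g \circ f$ non-harmonic.

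The key structural fact is that harmonicity is destroyed by post-composition with a non-affine holomorphic map. I would take $g \in \Sigma(\D) \subseteq HQ(\D)$ to be a genuine Möbius automorphism of $\D$, say $g(z) = \frac{z - a}{1 - \bar a z}$ with $a \neq 0$ (so that $g$ is holomorphic but not affine, with $g'' $ nowhere vanishing), and $f \in HQ(\D)$ a non-conformal element. Writing $f = \Phi + \overline{\Psi}$ with $\Phi, \Psi$ holomorphic on $\D$ (possible since $\D$ is simply connected), non-conformality means $\Psi' \not\equiv 0$, while sense-preservation gives $|\Phi'| > |\Psi'|$, so $\Phi'$ never vanishes. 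A short Wirtinger computation, using $\Delta = 4\,\partial_z \partial_{\bar z}$, the identities $\partial_z f = \Phi'$ and $\partial_{\bar z} f = \overline{\Psi'}$, and the holomorphy of $g$, then gives $\Delta(g \circ f) = 4\, g''(f)\, \Phi'\, \overline{\Psi'}$. Since $g''(f)$ is nowhere zero, $\Phi' \not\equiv 0$ and $\Psi' \not\equiv 0$, the right-hand side is a nonzero real-analytic function; hence $\Delta(g \circ f) \not\equiv 0$ and $g \circ f$ is not harmonic. Therefore $g \circ f \in Q(\D) \setminus HQ(\D)$, and $HQ(\D)$ is not closed under composition.

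The one non-routine ingredient — and the main obstacle — is the existence of a non-conformal element $f \in HQ(\D)$, without which the pairing above collapses, since the composition of two conformal automorphisms is again conformal. I would supply this by invoking the characterization of harmonic quasiconformal automorphisms through their boundary maps (Kalaj's Theorem A and the Pavlovi\'c characterization in Proposition \ref{PropositionPavlovicAbstractHQC}, together with Corollary \ref{CorollaryCharacterizationHQD}): a non-Möbius quasisymmetric self-homeomorphism of $\partial \D$ whose Poisson extension is quasiconformal produces a harmonic quasiconformal automorphism of $\D$ that is not conformal. Once the existence of a single such $f$ is granted, the computation above is entirely routine.

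Finally, the statement that $HQ(\D)$ is not a subgroup can alternatively be read off directly from the Choquet--Deny Proposition \ref{PropositionChoquetDeny}, which is the route suggested by the accompanying references. If $f \in HQ(\D)$ is chosen neither analytic nor affine and not of the special form (\ref{FormulaInverseHarmonicMapping}), then its inverse $f^{-1}$ — which is automatically a quasiconformal automorphism of $\D$ — fails to be harmonic, so $f^{-1} \in Q(\D) \setminus HQ(\D)$ and $HQ(\D)$ is not closed under inversion. Either way, $HQ(\D)$ cannot be a subgroup of $Q(\D)$.
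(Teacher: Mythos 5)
Your argument is correct, and it is considerably more explicit than what the paper offers: the paper derives the theorem with no written proof, merely noting that compositions of harmonic maps need not be harmonic and citing the Choquet--Deny result (Proposition \ref{PropositionChoquetDeny}) for inverses. Your route actually fills the gap that this one-liner leaves open, namely that the failure of closure must be witnessed by maps lying in the much smaller class $HQ(\D)$ rather than among arbitrary harmonic maps. The Wirtinger computation $\Delta(g \circ f) = 4\, g''(f)\, \Phi'\, \overline{\Psi'}$ is right (for holomorphic $g$ the only surviving term in $\partial_{\bar z}\partial_z(g\circ f)$ is $g''(f)\,\partial_{\bar z}f\,\partial_z f$), and your three non-vanishing conditions are each justified: $g''$ is nowhere zero for a M\"obius map with $a \neq 0$, $\Phi'$ is nowhere zero by sense-preservation ($|\Phi'|^2 - |\Psi'|^2 > 0$), and $\Psi' \not\equiv 0$ precisely because $f$ is not conformal. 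The one ingredient you rightly flag --- existence of a non-conformal element of $HQ(\D)$ --- is indeed supplied by Proposition \ref{PropositionPavlovicAbstractHQC}: any smooth, strictly increasing $\varphi$ with $\varphi(t+2\pi)=\varphi(t)+2\pi$ whose induced circle map is not the boundary trace of a M\"obius transformation works (the paper's own incompleteness proof constructs exactly such mappings $\PT[\gamma_n]$), so this is not a genuine gap. Your closing alternative via Choquet--Deny is the paper's intended argument for the subgroup claim, though note it requires checking that some $f \in HQ(\D)$ avoids the exceptional form (\ref{FormulaInverseHarmonicMapping}); since your composition argument already settles both assertions without this, the proof stands independently of that verification.
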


%

\section{Topological properties of $HQ(\D)$}

This section is intended to study some central topological properties of the space $HQ(\D)$. Since this situation is settled in the context of metric spaces, many of these topological notions can be expressed in terms of convergent sequences in the space $HQ(\D)$. Thus, certain convergence results for uniformly convergent sequences of harmonic mappings will prove valuable, as stated in

\begin{proposition}
\label{PropositionConvergenceTheorem} \hspace{0.1cm} \newline
Let $(f_n)_{n \in \N}$ be a sequence of harmonic mappings on a domain $G \subseteq \C$.
\begin{compactenum}[(i)]
	\item{If $(f_n)_n$ converges locally uniformly on $G$ to some function $f$, then $f$ is harmonic (Weierstraß--type Theorem, see \cite[Theorem 1.23, p. 16]{AxlerEtAl}).}
	\item{If additionally, all $f_n$ are injective and the sequence $(f_n)_n$ converges locally uniformly on $G$ to $f$, then $f$ is either injective, a constant mapping, or $f(G)$ lies on straight line (Hurwitz--type Theorem, see \cite[Theorem 1.5]{BshoutyHengartner}).}
\end{compactenum}
\end{proposition}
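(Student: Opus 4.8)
The plan is to treat the two assertions separately, reducing each to a classical principle about harmonic functions. For (i), I would show that the mean value property is preserved under locally uniform convergence. Each $f_n$ is harmonic, so for every closed disk $\overline{D(a,r)} \subseteq G$ it satisfies $f_n(a) = \frac{1}{2\pi}\int_0^{2\pi} f_n(a + r e^{it})\,\mathrm{d}t$. Since $f_n \to f$ uniformly on the compact circle $\partial D(a,r)$, I may interchange limit and integral to obtain the identical mean value relation for $f$, and $f$ is continuous as a locally uniform limit of continuous functions. A continuous function possessing the mean value property over all sufficiently small circles is harmonic (equivalently, it coincides on each such disk with the Poisson integral of its boundary values), so $f$ is harmonic on $G$. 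This step is routine, and the cited Weierstraß-type theorem records exactly this conclusion.

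For (ii), the strategy mirrors the classical proof of Hurwitz's theorem via the argument principle, adapted to the harmonic setting. First, since each $f_n$ is injective and harmonic, Lewy's theorem gives $J_{f_n} \neq 0$; by connectedness of $G$ each $f_n$ is then either sense-preserving throughout or sense-reversing throughout, and after possibly composing with complex conjugation I may assume all $f_n$ are sense-preserving. Writing $f_n = h_n + \overline{g_n}$ with $h_n, g_n$ analytic (normalized at a fixed base point), interior derivative estimates for harmonic functions show that locally uniform convergence of $f_n$ forces $h_n \to h$ and $g_n \to g$ locally uniformly; hence $f = h + \overline{g}$ is harmonic with analytic parts, consistent with part (i).

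Now suppose $f$ is neither constant nor has image contained in a straight line; I would show $f$ must be injective. Assume to the contrary $f(z_1) = f(z_2) = w_0$ for distinct $z_1, z_2 \in G$. Under these hypotheses $f$ is a non-degenerate sense-preserving harmonic mapping near each $z_j$, hence open there, so the harmonic argument principle applies: on a small circle $\gamma_j$ about $z_j$ with $f \neq w_0$ on $\gamma_j$, the winding number of $(f - w_0)|_{\gamma_j}$ about the origin is strictly positive. Choosing $\gamma_1, \gamma_2$ disjoint and invoking the locally uniform convergence $f_n \to f$ on $\gamma_1 \cup \gamma_2$, the winding numbers of $(f_n - w_0)|_{\gamma_j}$ agree with those for $f$ once $n$ is large, whence $f_n - w_0$ vanishes inside each $\gamma_j$. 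This yields two distinct preimages of $w_0$ under $f_n$, contradicting injectivity. The remaining alternatives are therefore that $f$ is injective, constant, or maps into a line.

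The main obstacle is the honest justification of the harmonic argument principle together with the role of the excluded degenerate case. Positivity of the winding number at a genuine zero requires $f$ to be sense-preserving and open near that zero; precisely when $f(G)$ collapses onto a line does the Jacobian vanish identically, openness fail, and the winding count drop to zero, so no contradiction can be extracted — which is exactly why that case, alongside the constant case, must appear as a genuine exception. Making rigorous the passage from \emph{non-constant with image not on a line} to \emph{sense-preserving and open}, and the stability of winding numbers under uniform convergence on the curves $\gamma_j$, is the technical heart of the argument; this is the content of the cited Hurwitz-type theorem.
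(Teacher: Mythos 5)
The paper offers no proof of this proposition: both parts are imported directly from the literature (Axler--Bourdon--Ramey for (i), Bshouty--Hengartner for (ii)), so there is no in-paper argument to compare against. Your part (i) is correct and complete: pushing the mean value property through the uniform limit on each circle and invoking the converse of the mean value property is exactly the standard proof of the Weierstra\ss{}--type theorem, applied to the real and imaginary parts of $f$.

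Part (ii) reproduces the correct skeleton of the standard Hurwitz-type argument, but the step you yourself flag as ``the technical heart'' is a genuine gap rather than a routine verification, and as written the argument is circular: you justify the passage from ``$f$ non-constant with image not on a line'' to ``$f$ sense-preserving and open'' by appealing to ``the content of the cited Hurwitz-type theorem,'' i.e.\ to the very statement being proved. The missing lemma is the dilatation dichotomy. Writing $f_n = h_n + \overline{g_n}$ locally with second dilatation $a_n := g_n'/h_n'$ analytic and $|a_n| < 1$, the convergence of derivatives gives, when $h' \not\equiv 0$, that $a_n \to a := g'/h'$ locally uniformly off the zeros of $h'$; by the maximum principle either $|a| < 1$ throughout, so that $J_f = |h'|^2(1 - |a|^2) > 0$ away from the isolated zeros of $h'$ and $f$ is sense-preserving and open (after which your winding-number argument does go through, granting the harmonic argument principle), or $a$ is a unimodular constant $e^{i\theta}$, in which case $g = e^{i\theta}h + c$ and $e^{i\theta/2}f$ is real-valued up to an additive constant, i.e.\ $f(G)$ lies on a line; and if $h' \equiv 0$, then $|g_n'| < |h_n'| \to 0$ forces $f$ to be constant. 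This dichotomy is where the three alternatives actually come from, and without it the case analysis is asserted rather than derived. Two smaller points: you cannot ``compose with complex conjugation'' term by term to make every $f_n$ sense-preserving without destroying the convergence of the sequence --- one must pass to a subsequence of constant orientation and transfer the conclusion from $\overline{f}$ back to $f$; and the global decomposition $f_n = h_n + \overline{g_n}$ with single-valued $h_n, g_n$ requires $G$ to be simply connected, so for a general domain one should argue with $\partial f_n/\partial z$ and $\partial f_n/\partial \bar z$ or work locally.
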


The first result concerning certain topological aspects of $HQ(\D)$ is given in

\begin{theorem} \label{TheoremClosedNotOpen}\hspace{0.1cm} \newline
The space $HQ(\D)$ is separable and non--compact. As a subspace, it is closed in $Q(\D)$.
\end{theorem}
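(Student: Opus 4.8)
The plan is to treat the three assertions separately, using the homeomorphic boundary extension of quasiconformal automorphisms together with the convergence results of Proposition~\ref{PropositionConvergenceTheorem}.

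For \emph{separability}, I would exploit the fact that every $f \in HQ(\D) \subseteq Q(\D)$ admits a homeomorphic extension to $\overline{\D}$ (see \cite[p.~13]{Lehto}), so that $f$ may be regarded as an element of the space $C(\overline{\D}, \C)$ equipped with the supremum norm. Since $\D$ is dense in $\overline{\D}$ and $|f - g|$ is continuous on $\overline{\D}$ for continuous $f, g$, one has $\dsup(f,g) = \sup_{z \in \overline{\D}} |f(z) - g(z)|$; hence this identification embeds $(HQ(\D), \dsup)$ isometrically into $C(\overline{\D}, \C)$. Because $\overline{\D}$ is a compact metric space, $C(\overline{\D}, \C)$ is separable, and separability is inherited by every subspace of a separable metric space. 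Thus $HQ(\D)$ is separable.

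For \emph{closedness in $Q(\D)$}, I would take a sequence $(f_n)_n$ in $HQ(\D)$ with $\dsup(f_n, f) \to 0$ for some $f \in Q(\D)$ and show $f \in HQ(\D)$. By Corollary~\ref{CorollaryCharacterizationHQD} it suffices to verify that $f$ is harmonic, since $f \in Q(\D)$ holds by assumption. Now uniform convergence with respect to $\dsup$ forces locally uniform convergence of $(f_n)_n$ to $f$ on $\D$, and each $f_n$ is harmonic; the Weierstraß--type statement of Proposition~\ref{PropositionConvergenceTheorem}(i) then yields that $f$ is harmonic. Consequently $f \in HQ(\D)$, so $HQ(\D)$ is closed as a subspace of $Q(\D)$.

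For \emph{non--compactness}, I would produce a sequence in $HQ(\D)$ admitting no subsequence that converges within $HQ(\D)$, since in a metric space compactness is equivalent to sequential compactness. A convenient choice is the family of conformal automorphisms
\[
\varphi_{a}(z) = \frac{z - a}{1 - \overline{a}\, z}, \qquad a \in \D,
\]
with $a = a_n \in (0,1)$ and $a_n \to 1$; each $\varphi_{a_n}$ lies in $\Sigma(\D) \subseteq HQ(\D)$. A direct estimate shows that $(\varphi_{a_n})_n$ converges locally uniformly on $\D$ to the constant function $-1$. If some subsequence converged with respect to $\dsup$ to a map $g \in HQ(\D)$, then this subsequence would also converge to $g$ locally uniformly, whence $g \equiv -1$ by uniqueness of locally uniform limits; but a constant map is not an automorphism of $\D$, contradicting $g \in HQ(\D)$. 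Therefore no such subsequence exists and $HQ(\D)$ is non--compact.

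The step I expect to require the most care is the non--compactness argument, precisely because the sequence $(\varphi_{a_n})_n$ does \emph{not} converge to the constant $-1$ in the uniform metric $\dsup$ --- indeed $\dsup(\varphi_{a_n}, -1)$ stays bounded away from $0$, as each $\varphi_{a_n}$ extends surjectively onto $\overline{\D}$. The argument must therefore not claim convergence to the constant, but only use that any hypothetical $\dsup$--limit inside $HQ(\D)$ would be forced to coincide with the local uniform limit, which is the forbidden constant. Keeping the distinction between uniform and locally uniform convergence sharp is the crux of this part.
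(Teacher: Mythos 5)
Your proof is correct, and two of its three parts take a genuinely different route from the paper. The closedness argument is essentially identical to the paper's (Weierstra\ss--type Proposition~\ref{PropositionConvergenceTheorem}(i) applied to a uniformly convergent sequence). For separability, the paper simply cites the separability of the ambient space $Q(\D)$ from \cite{BiersackLauf} and inherits it; your isometric embedding of $HQ(\D)$ into $C(\overline{\D}, \C)$ via the homeomorphic boundary extension is a self--contained alternative that reproves that fact rather than quoting it --- slightly longer, but it makes the mechanism (separability of $C(K)$ for compact metric $K$) explicit. For non--compactness, the paper argues abstractly: $\Sigma(\D)$ is complete by Gaier, hence closed in $HQ(\D)$, and a closed subspace of a compact space would be compact, contradicting Gaier's non--compactness of $\Sigma(\D)$. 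You instead exhibit a concrete sequence of M\"{o}bius transformations with no $\dsup$--convergent subsequence in $HQ(\D)$; this is in effect a direct proof of the fact the paper outsources to \cite{Gaier}, and your handling of the distinction between locally uniform and uniform convergence (using only that a hypothetical $\dsup$--limit must agree with the locally uniform limit $-1$, which is not an automorphism) is exactly the point that needs care and is handled correctly. Your version is more elementary and self--contained; the paper's is shorter and reuses established results about $\Sigma(\D)$ and $Q(\D)$.
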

\begin{proof}
If $(f_n)_{n \in \N} \subseteq HQ(\D)$ converges uniformly on $\D$ to $f \in Q(\D)$, then $f$ is harmonic by Proposition \ref{PropositionConvergenceTheorem}(i), hence $f \in HQ(\D)$. Therefore, $HQ(\D)$ is closed in $Q(\D)$. \newline
As for the separability of $HQ(\D)$, it suffices to observe that the ambient metric space $Q(\D)$ is separable by \cite[Theorem 6, p. 5]{BiersackLauf}. The claimed separability of $HQ(\D)$ is then implied by the fact that subspaces of separable metric spaces are also separable. In order to see that $HQ(\D)$ is a non--compact space, suppose the contrary, i.e. $HQ(\D)$ is compact in the uniform topology. Due to the completeness of $\Sigma(\D)$ (see \cite[Satz 1, p. 229]{Gaier}), the space $\Sigma(\D)$ is closed in the ambient space $HQ(\D)$. However, this yields that $\Sigma(\D)$ would also be compact as a closed subspace of the compact space $HQ(\D)$, contradicting the non--compactness of $\Sigma(\D)$ (see \cite[Satz 1, p. 229]{Gaier}). Hence $HQ(\D)$ is not compact.
\end{proof}

An elementary persistence property in the interplay between harmonic and holomorphic mappings is that the post--composition of a holomorphic function with a harmonic one remains harmonic (see \cite[p. 2]{Duren}). This fact is utilized in order to prove

\begin{theorem} \label{TheoremDenseInItself} \hspace{0.1cm} \newline
The space $HQ(\D)$ is dense--in--itself, i.e. it does not contain any isolated points.
\end{theorem}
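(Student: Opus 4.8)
The plan is to show that every $f \in HQ(\D)$ can be approximated arbitrarily well in the metric $\dsup$ by distinct elements of $HQ(\D)$, by composing $f$ on the inside with conformal automorphisms of $\D$ close to the identity. For $a \in \D$ let $\phi_a$ denote the M\"{o}bius automorphism
\[
\phi_a(z) = \frac{z - a}{1 - \bar a z}, \qquad z \in \D,
\]
and set $g_a := f \circ \phi_a$. Since $g_a$ is the harmonic mapping $f$ applied after the holomorphic inner map $\phi_a$, the persistence property recalled before the statement guarantees that $g_a$ is again harmonic. As a composition of two quasiconformal automorphisms of $\D$ it is itself a quasiconformal automorphism of $\D$, so $g_a \in HQ(\D)$.

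Next I would establish two quantitative facts. First, $\phi_a \to \id_\D$ uniformly on $\overline{\D}$ as $a \to 0$: from the identity $\phi_a(z) - z = (\bar a z^2 - a)/(1 - \bar a z)$ one reads off the bound $|\phi_a(z) - z| \le 2|a|/(1 - |a|)$ for all $z \in \D$, whose right-hand side tends to $0$ with $|a|$. Second, every quasiconformal automorphism of the Jordan domain $\D$ extends to a homeomorphism of $\overline{\D}$ (as already used in the excerpt), so $f$ is uniformly continuous on the compact set $\overline{\D}$. Combining the uniform convergence $\phi_a \to \id_\D$ with the uniform continuity of $f$ yields
\[
\dsup(g_a, f) = \sup_{z \in \D} \left| f(\phi_a(z)) - f(z) \right| \longrightarrow 0 \qquad (a \to 0),
\]
so that $g_a$ lies in any prescribed $\dsup$-ball about $f$ once $|a|$ is sufficiently small.

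It then remains to see that the approximants are genuinely different from $f$. For $a \neq 0$ one has $\phi_a \neq \id_\D$, and since $f$ is injective, the equation $g_a = f \circ \phi_a = f$ would force $\phi_a = \id_\D$, a contradiction; hence $g_a \neq f$. Consequently no $\dsup$-neighborhood of $f$ collapses to $\{f\}$, i.e. $f$ is not an isolated point of $HQ(\D)$. As $f \in HQ(\D)$ was arbitrary, $HQ(\D)$ is dense-in-itself.

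The verifications that the composition remains harmonic and quasiconformal, as well as the M\"{o}bius estimate, are routine. The one point requiring genuine care is the upgrade from merely locally uniform convergence on $\D$ to uniform convergence with respect to $\dsup$: this is precisely where the continuous boundary extension of $f$, and thus its uniform continuity on $\overline{\D}$, is indispensable, since without it the perturbation could in principle fail to stay inside the $\dsup$-ball near $\partial \D$.
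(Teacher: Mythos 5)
Your proof is correct and follows essentially the same route as the paper: precompose $f$ with conformal automorphisms of $\D$ tending to the identity, use the persistence of harmonicity under holomorphic precomposition and the closure of quasiconformality under composition, and conclude via injectivity that the perturbations are distinct from $f$. The only difference is that you make the perturbing maps $\phi_a$ and the convergence $\dsup(f \circ \phi_a, f) \to 0$ explicit (via the M\"obius estimate and the uniform continuity of $f$ on $\overline{\D}$), whereas the paper obtains the same conclusion by citing the non--discreteness of $\Sigma(\D)$ and the continuity of left multiplication in the topological group $Q(\D)$.
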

\begin{proof}
The space $Q(\D)$ is a topological group (see \cite[Theorem 3, p. 3]{BiersackLauf}) and not discrete (\cite[Theorem 12, p. 8]{BiersackLauf}); in particular, $\Sigma(\D)$ is not discrete (as already noticed in \cite[p. 230]{Gaier}). Hence, let $h \in HQ(\D)$ be arbitrary and choose a sequence $(f_n)_{n \in \N}$ in $\Sigma(\D) \backslash \{ \id_\D \}$ converging to $\id_\D$. Then, for each $n \in \N$, the mapping $g_n := h \circ f_n$ is harmonic and quasiconformal, thus $(g_n)_n$ is a sequence in $HQ(\D)$. The continuity of left multiplication in the topological group $Q(\D)$ yields $d_{\sup}(g_n, h) = d_{\sup}(h \circ f_n, h) \overset{n \to \infty}{\longrightarrow} 0$ due to $d_{\sup}(f_n, \id_\D) \overset{n \to \infty}{\longrightarrow} 0$.
\end{proof}

Combining Theorem \ref{TheoremClosedNotOpen} and Theorem \ref{TheoremDenseInItself} yields

\begin{corollary} \hspace{0.1cm} \newline
The space $HQ(\D)$ is perfect, i.e. it is closed in $Q(\D)$ and contains no isolated points.
\end{corollary}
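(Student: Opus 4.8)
The plan is to obtain this statement with essentially no new work, simply by assembling the two theorems just established, because the term \emph{perfect} denotes precisely the conjunction of the two properties they supply. First I would invoke Theorem \ref{TheoremClosedNotOpen}, which already shows that $HQ(\D)$ is closed as a subspace of $Q(\D)$; this settles the first of the two defining conditions. Next I would invoke Theorem \ref{TheoremDenseInItself}, which shows that $HQ(\D)$ is dense--in--itself, i.e. that no point of $HQ(\D)$ is isolated; this settles the second condition. Recalling the standard definition of a perfect subset of a topological space --- one that is simultaneously closed and dense--in--itself --- then yields the claim at once.

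I expect no real obstacle here, since all of the analytic content has already been discharged upstream: the Weierstra\ss--type convergence result of Proposition \ref{PropositionConvergenceTheorem}(i), which underlies the closedness in Theorem \ref{TheoremClosedNotOpen}, and the post--composition of an arbitrary $h \in HQ(\D)$ with a sequence of near--identity conformal automorphisms $f_n \in \Sigma(\D)$, which underlies the absence of isolated points in Theorem \ref{TheoremDenseInItself}. The only point worth making explicit in the write--up is that the notion of ``perfect'' in use is exactly this conjunction of closedness and being dense--in--itself --- a fact the statement of the corollary itself spells out --- so that no further verification is required beyond citing the two theorems.
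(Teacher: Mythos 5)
Your proposal is correct and matches the paper exactly: the corollary is stated there as an immediate combination of Theorem \ref{TheoremClosedNotOpen} (closedness in $Q(\D)$) and Theorem \ref{TheoremDenseInItself} (no isolated points), with no further argument given. Nothing more is needed.
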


Another property to be studied is the path--connectedness of $HQ(\D)$. In this context, the following integral operator will be of crucial importance (see \cite[p. 367]{Pavlovic} and \cite[p. 305]{PavlovicBook}):

\begin{definition}[Hilbert transformation] \hspace{0.1cm} \label{DefinitionHilbertTransformation} \newline
For periodic $\varphi \in L^1([0, 2 \pi])$ and $x \in \R$, the expression
\begin{align}
\HT(\varphi)(x) := - \frac{1}{\pi} \lim \limits_{\epsilon \to 0^+} \int \limits_{\epsilon}^{\pi} \frac{\varphi(x + t) - \varphi(x - t)}{2 \tan(t/2)} \, \operatorname{d}t
\label{FormulaHilbertTransformationDefinition}
\end{align}
is called the (periodic) Hilbert transformation of $\varphi$.
\end{definition}

\begin{remark} \label{RemarkHilbertTransformation} \hspace{0.1cm}
\begin{compactenum}[(i)]
	\item{In Fourier theory and trigonometric series, the Hilbert transformation plays a prominent role. However, the definition of the operator $\HT$ is not completely consistent in the vast literature about this topic. For example, a different formulation is given by
	\[
	\HT(\varphi)(x) = - \frac{1}{\pi} \lim \limits_{\epsilon \to 0^+} \int \limits_{\epsilon}^{\pi} \frac{\varphi(x + t) - \varphi(x - t)}{t} \, \operatorname{d}t,
	\]
	which is -- at least for existence questions -- equivalent to (\ref{FormulaHilbertTransformationDefinition}) due to $2 \tan(t/2) - t = 0$ for $t \longrightarrow 0$ (see \cite[p. 306]{PavlovicBook} and \cite[Vol. I, p. 52]{Zygmund}).}
	\item{The notion of Hilbert transformation is also present in further mathematical areas, for example in the classical theory of quasiconformal mappings in $\C$ (see \cite[pp. 156--160]{LehtoVirtanen}) and Teichm\"{u}ller spaces (see \cite[pp. 319--320]{GardinerLakic}). However, the circumstance that the definitions are in parts considerably different from each other is also present in these contexts.}
\end{compactenum}
\end{remark}

Due to the presence of the tangent function in the integrand's denominator in (\ref{FormulaHilbertTransformationDefinition}), the question for existence of $\mathfrak{H}$ raises, partially answered in (see \cite[p. 367]{Pavlovic} and \cite[Vol. I, p. 52]{Zygmund})

\begin{lemma} \hspace{0.1cm} \label{LemmaExistenceHilbertTransformationIntegral} \newline
For periodic $\varphi \in L^1([0, 2 \pi])$, the Hilbert transformation $\HT(\varphi)(x)$ exists for almost every $x \in \R$. Furthermore, $\HT(\varphi)(x)$ exists if $\varphi'(x)$ exists and is finite at $x \in \R$.
\end{lemma}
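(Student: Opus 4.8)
The plan is to treat the two assertions separately, in both cases reducing matters to the behaviour of the integrand near $t = 0$. For every fixed $\epsilon > 0$ and every $x \in \R$ the truncated integral $\int_\epsilon^\pi \frac{\varphi(x+t) - \varphi(x-t)}{2\tan(t/2)}\,\mathrm{d}t$ is a well-defined real number, since $1/(2\tan(t/2))$ is bounded on $[\epsilon, \pi]$ and $t \mapsto \varphi(x \pm t)$ is integrable there by periodicity of $\varphi \in L^1$. Hence only the limit $\epsilon \to 0^+$, i.e. the singularity of the kernel at $t = 0$, is at issue. By Remark \ref{RemarkHilbertTransformation}(i) I may freely replace $2\tan(t/2)$ by $t$, since the two kernels differ by a factor bounded near $0$, so that the two associated transforms differ only by an absolutely convergent integral.

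For the second, pointwise statement I would exploit the odd symmetry built into the difference $\varphi(x+t) - \varphi(x-t)$. Writing
\[
\frac{\varphi(x+t) - \varphi(x-t)}{t} = \frac{\varphi(x+t) - \varphi(x)}{t} + \frac{\varphi(x) - \varphi(x-t)}{t},
\]
both summands tend to $\varphi'(x)$ as $t \to 0^+$ whenever $\varphi'(x)$ exists and is finite, so the integrand extends boundedly across $t = 0$. It is therefore absolutely integrable on $(0, \pi]$, and the principal-value limit exists at once (the truncation being superfluous). This settles the ``furthermore'' part. A dense supply of such functions will be needed below: periodic $C^1$ functions have this property at every point.

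The almost-everywhere existence is the substantive claim and the main obstacle. One conceptual route passes to the conjugate-function picture: if $u$ is the Poisson integral of $\varphi$ and $v$ its harmonic conjugate normalized by $v(0) = 0$, then $v(re^{ix})$ is the convolution of $\varphi$ with the conjugate Poisson kernel $\frac{2r\sin t}{1 - 2r\cos t + r^2}$, which tends to $\cot(t/2)$ as $r \to 1^-$. A formal passage to the radial limit identifies $\lim_{r \to 1^-} v(re^{ix})$ with $\HT(\varphi)(x)$, reducing the claim to (a) the a.e.\ existence of these radial limits and (b) their coincidence with the principal value. Part (a) is a Fatou/Privalov-type theorem, since $u + iv$ is holomorphic on $\D$ and of bounded characteristic, whence boundary values exist nontangentially a.e.

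The cleanest self-contained route, and the one I would actually carry out, is the standard real-variable argument combining a maximal inequality with the dense subclass. Let $H_\epsilon \varphi$ denote the truncated transform and $H^*\varphi := \sup_{\epsilon > 0}|H_\epsilon \varphi|$ its maximal operator. The crux, and the genuinely hard harmonic-analytic input, is Kolmogorov's weak-type $(1,1)$ estimate for the conjugate function, together with a Cotlar-type pointwise bound $H^*\varphi \lesssim M(H\varphi) + M\varphi$ by the Hardy--Littlewood maximal function $M$, which promotes it to the weak-$(1,1)$ boundedness of $H^*$. Granting this, a.e.\ existence of $\lim_{\epsilon \to 0^+} H_\epsilon \varphi$ follows by the usual scheme: given $\varphi \in L^1$ and $g \in C^1$ with $\|\varphi - g\|_{L^1}$ small, the oscillation satisfies $\limsup_{\epsilon,\epsilon' \to 0}\bigl|H_\epsilon \varphi - H_{\epsilon'}\varphi\bigr| \le 2H^*(\varphi - g)$ pointwise, because the limit for $g$ exists everywhere by the pointwise statement; the weak-$(1,1)$ bound then forces the exceptional set to have measure below any prescribed threshold, hence measure zero. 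I expect the weak-type control of $H^*$ to be the only step requiring real work, everything else being routine bookkeeping.
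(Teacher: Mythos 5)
The paper does not prove this lemma at all: it is quoted from the literature, with the a.e.\ existence attributed to Zygmund (Vol.~I, p.~52) and Pavlovi\'{c}, so there is no in-paper argument to compare against. Your sketch correctly reconstructs what those sources contain. The ``furthermore'' part is handled exactly as in Zygmund: differentiability at $x$ makes the antisymmetric difference quotient bounded near $t=0$, hence the integrand absolutely integrable on $(0,\pi]$, and your preliminary reduction from the kernel $1/(2\tan(t/2))$ to $1/t$ is legitimate since their difference is bounded on $(0,\pi]$. For the a.e.\ statement you offer two standard routes; the conjugate-function route is the one Zygmund actually takes, and there your step (b) --- identifying the a.e.\ radial limit of the conjugate harmonic function with the principal-value integral --- is itself a nontrivial theorem, not a ``formal passage,'' though you do flag it as a separate ingredient. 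The real-variable route (weak-type $(1,1)$ for the maximal truncated transform via Kolmogorov plus a Cotlar-type bound, then density of $C^1$ and the oscillation estimate $\limsup_{\epsilon,\epsilon'}|H_\epsilon\varphi - H_{\epsilon'}\varphi| \le 2H^*(\varphi-g)$) is also correct as outlined; note only that the Cotlar inequality as you state it presupposes that $H\varphi$ has already been given meaning for $\varphi \in L^1$, which requires the weak-type extension of the $L^2$ theory first. Both routes rest on genuinely hard harmonic-analytic input that you cite rather than prove, which is entirely appropriate here since the paper itself treats the lemma as known.
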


Now the connection between the Hilbert transformation $\HT$ and $HQ(\D)$ will be clarified. By the Rad\'{o}--Kneser--Choquet Theorem \ref{PropositionRKC}, every mapping $\gamma = e^{i \varphi} \in \mathcal{H}^+(\partial \D)$ defines a harmonic automorphism of $\D$ by means of the Poisson transformation $\PT[e^{i \varphi}]$ (this statement remains true even for $\gamma \in \mathcal{H}^\ast(\partial \D)$, see also \cite[(1.3), p. 338]{KrzyzNowak}). The question for whether this harmonic extension is quasiconformal has been answered in a characterizing manner by Pavlovi\'{c} in \cite{Pavlovic}, and is stated in (see \cite[Theorem 10.18, p. 305]{PavlovicBook})

\begin{proposition} \label{PropositionPavlovicAbstractHQC}\hspace{0.1cm} \newline
Let $f: \D \longrightarrow \D$ be an orientation--preserving harmonic homeomorphism of the unit disk onto itself. Then the following conditions are equivalent:
\begin{compactenum}[(i)]
	\item{$f$ is quasiconformal, i.e. $f \in HQ(\D)$;}
	\item{$f = \PT[e^{i \varphi}]$, where the function $\varphi$ has the following properties:
	\begin{enumerate}
		\item{$\varphi(t + 2\pi) - \varphi(t) = 2 \pi$ for all $t \in \R$;}
		\item{$\varphi$ is strictly increasing and bi--Lipschitz;}
		\item{the Hilbert transformation of $\varphi'$ is an element of $L^\infty(\R)$.}
	\end{enumerate}
	}
\end{compactenum}
\end{proposition}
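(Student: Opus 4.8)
The plan is to reduce the whole equivalence to a statement about a single holomorphic function, the analytic dilatation of $f$, and then to translate the quasiconformality bound into a boundary condition via the Poisson/Hilbert machinery. I would write the orientation--preserving harmonic homeomorphism as $f = h + \overline{g}$ with $h, g$ holomorphic on $\D$. Since $f$ is an orientation--preserving harmonic homeomorphism, its Jacobian $J_f = |h'|^2 - |g'|^2$ is strictly positive, so $h'$ never vanishes and $\omega := g'/h'$ is a well--defined \emph{holomorphic} function on $\D$ with $|\omega| < 1$ throughout; this analyticity of the dilatation is the crucial structural feature of planar harmonic maps. Quasiconformality of $f$ is by definition the requirement $\|\omega\|_\infty < 1$, and since $\omega$ is a bounded holomorphic self--map of $\D$, the maximum modulus principle gives that $\|\omega\|_\infty$ equals the essential supremum of its nontangential boundary values. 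Thus the entire problem becomes: decide, from boundary data alone, whether these boundary values stay bounded away from the unit circle. Condition (a) is immediate, since a homeomorphism of $\D$ onto itself induces a degree--one homeomorphism of $\partial\D$ (represented by $e^{i\varphi}$ through Proposition \ref{PropositionRKC}), which lifts to an increasing $\varphi$ with $\varphi(t + 2\pi) = \varphi(t) + 2\pi$.

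The second step is the boundary computation that produces the Hilbert transform. Writing $z = re^{i\phi}$ and using the polar identities $f_\phi = izh' - i\bar z\,\overline{g'}$ and $rf_r = zh' + \bar z\,\overline{g'}$, I would split $f$ into its holomorphic and anti--holomorphic parts and pass to the boundary, where $f(e^{i\phi}) = e^{i\varphi(\phi)} =: \Gamma$ and $f_\phi \to i\varphi'\Gamma$. Matching Fourier coefficients (equivalently, applying the Riesz projection sending the Poisson extension to its holomorphic and co--holomorphic parts) yields $zh' = \mathcal{P}_+(\varphi'\Gamma)$ and $\bar z\,\overline{g'} = -\mathcal{P}_-(\varphi'\Gamma)$, so that the radial boundary derivative $\rho := \lim_{r\to1} rf_r$ is, by the conjugate--function theory, a constant multiple of $\HT(\varphi'\Gamma)$ (normalised as in Definition \ref{DefinitionHilbertTransformation}). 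Since $|z| = 1$ on the boundary, this gives the clean almost--everywhere identity $|\omega| = |\rho - \varphi'\Gamma|\,/\,|\rho + \varphi'\Gamma|$ on $\partial\D$, which is precisely the bridge between the dilatation of $f$ and the transform $\HT$.

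For the implication (ii) $\Rightarrow$ (i) I would then argue purely on the boundary. The bi--Lipschitz hypothesis (b) gives $0 < m \le \varphi' \le M < \infty$ almost everywhere, so $|\varphi'\Gamma| = \varphi'$ is bounded above and below; hypothesis (c), combined with a commutator estimate for $[\HT, e^{i\varphi}]$ that is available because $\varphi$ is Lipschitz, upgrades $\HT(\varphi') \in L^\infty$ to $\rho \in L^\infty$. The orientation--preserving homeomorphism property forces $\operatorname{Re}(\rho\,\overline{\varphi'\Gamma}) > 0$, i.e.\ $|\omega| < 1$ on $\partial\D$, and the quantitative bounds just assembled make this strict inequality uniform: $|\rho - \varphi'\Gamma|/|\rho + \varphi'\Gamma| \le k < 1$ almost everywhere. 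Applying the maximum modulus principle to $\omega$ then yields $\|\omega\|_\infty \le k$, so $f$ is $K$--quasiconformal with $K = (1+k)/(1-k)$.

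The converse (i) $\Rightarrow$ (ii) is where I expect the real difficulty. Quasiconformality directly gives only that the boundary correspondence is quasisymmetric, which is strictly weaker than bi--Lipschitz; upgrading to (b) is the genuinely hard, harmonicity--dependent step, since one must exploit that $\omega$ is holomorphic with $\|\omega\|_\infty < 1$ in order to control $\varphi'$ from above and below through the identities of the second step (this is the ``harmonic quasiconformal $\Rightarrow$ bi--Lipschitz'' phenomenon). Once (b) is established, $\rho$ is bounded because $rf_r$ remains bounded up to the boundary for a quasiconformal harmonic map, and reversing the commutator estimate returns $\HT(\varphi') \in L^\infty$, giving (c). The two points to watch throughout are that $\HT$ is \emph{not} bounded on $L^\infty$, so (c) cannot be obtained from $L^p$ theory alone and must instead come from the homeomorphism/analyticity structure, and that the modulation passage between $\HT(\varphi'\Gamma)$ and $\HT(\varphi')$ relies essentially on the Lipschitz regularity of $\varphi$ secured in (b).
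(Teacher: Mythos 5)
The paper does not prove this proposition at all: it is imported verbatim as a known theorem of Pavlovi\'{c} (cited as \cite[Theorem 10.18, p. 305]{PavlovicBook} and \cite{Pavlovic}), so there is no internal proof to compare yours against. That said, your outline does follow the broad strategy of Pavlovi\'{c}'s original argument: the canonical decomposition $f = h + \overline{g}$, the holomorphic dilatation $\omega = g'/h'$, the polar identities recovering $zh'$ and $\bar z\,\overline{g'}$ from $rf_r$ and $f_\phi$, and the appearance of $\HT$ as the boundary trace of the conjugation operator. The identity $|\omega| = |\rho - \varphi'\Gamma|/|\rho + \varphi'\Gamma|$ on $\partial \D$ is correct and is indeed the right bridge, and the commutator trick for passing between $\HT(\varphi'\Gamma)$ and $\Gamma\,\HT(\varphi')$ is legitimate since the kernel of $[\HT, \Gamma]$ is bounded when $\Gamma$ is Lipschitz.

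As a proof, however, the proposal has a genuine gap at the decisive step of (ii) $\Rightarrow$ (i). From $\rho \in L^\infty$ and $0 < m \leq \varphi' \leq M$ you assert that the strict inequality $\operatorname{Re}\bigl(\rho\,\overline{\varphi'\Gamma}\bigr) > 0$ (equivalently $|\omega| < 1$ a.e.\ on $\partial\D$) becomes \emph{uniform}. It does not: a measurable function that is positive almost everywhere and bounded above need not be bounded away from zero, and $|\omega| < 1$ a.e.\ on the boundary gives only $\|\omega\|_\infty \leq 1$ via the maximum principle, not $\|\omega\|_\infty < 1$. Producing the uniform lower bound on the boundary Jacobian $\varphi'\operatorname{Re}(\bar\Gamma\rho)$ is precisely where the real content of Pavlovi\'{c}'s theorem lies and requires additional analytic input beyond the bounds you assemble. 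Symmetrically, in (i) $\Rightarrow$ (ii) you correctly flag the ``harmonic quasiconformal implies bi--Lipschitz boundary correspondence'' step as the hard one, but you do not supply it; it is itself the main theorem of \cite{Pavlovic}. So what you have is a faithful road map of the known proof with the two hardest segments left unbuilt, which is reasonable for a result the paper itself only quotes, but it should not be mistaken for a self--contained argument.
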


A mapping $g: X \longrightarrow Y$ between metric spaces $(X, d_X)$ and $(Y, d_Y)$ is called \textit{bi--Lipschitz} if there exists a constant $L \in [1, + \infty)$ such that
\[
\frac{1}{L} d_X(x_1, x_2) \leq d_Y(g(x_1), g(x_2)) \leq L d_X(x_1, x_2)
\]
for all $x_1, x_2 \in X$, thus sharpening the classical notion of a Lipschitz--continuous mapping. In view of Corollary \ref{CorollaryCharacterizationHQD} and Proposition \ref{PropositionPavlovicAbstractHQC}, the following characterization for the elements of the space $HQ(\D)$ is valid: A harmonic (orientation--preserving) homeomorphism $\PT[e^{i \varphi}]$ of $\D$ onto itself is quasiconformal if and only if the corresponding mapping $\varphi$ is an element of
\begin{align*}
\HQC := \left\{ \varphi \in C([0,2\pi]) \; \bigg{|} \; \varphi \text{ is strictly increasing and bi--Lipschitz, } \varphi(2\pi) - \varphi(0) = 2\pi, \, \HT(\widetilde{\varphi}') \in L^\infty(\R) \right\}.
\end{align*}
Here, $\widetilde{\varphi}$ denotes the canonical extension of $\varphi \in \HQC$ to all of $\R$ via
\[
\widetilde{\varphi}(t + 2k\pi) := \varphi(t) + 2k\pi
\]
for all $t \in [0,2\pi]$ and every $k \in \Z$. By the requirement of strict increasing monotonicity, every mapping $\varphi \in \HQC$ is differentiable almost everywhere in (the interior of) $[0, 2 \pi]$. Consequently, each extended mapping $\widetilde{\varphi} \in C(\R)$ is differentiable almost everywhere in $\R$ with $\widetilde{\varphi}'$ being $2\pi$--periodic by construction. Furthermore, the assumption that $\varphi$ is bi--Lipschitz yields $\varphi' \in L^1([0, 2 \pi])$ (see \cite[Theorem 10, p. 124]{RoydenFitzpatrick}). Therefore, the condition $\HT(\widetilde{\varphi}') \in L^\infty(\R)$ is reasonable. In view of the path--connectedness of $HQ(\D)$, the first important observation to be made here is

\begin{lemma} \hspace{0.1cm} \label{LemmaSetIsConvex} \newline
The subset $\HQC \subsetneq C([0, 2 \pi])$ is convex. In particular, $\HQC$ is path--connected in the Banach space $C([0,2\pi])$.
\end{lemma}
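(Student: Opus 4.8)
The plan is to show that $\HQC$ is convex, from which path-connectedness follows immediately because convex subsets of any normed vector space are path-connected via straight-line segments $t \mapsto (1-t)\varphi_0 + t\varphi_1$. So the entire task reduces to verifying that if $\varphi_0, \varphi_1 \in \HQC$ and $t \in [0,1]$, then the convex combination $\psi := (1-t)\varphi_0 + t\varphi_1$ again satisfies all the defining conditions of $\HQC$. I would simply check the four conditions one at a time, since each is preserved under convex combinations for an essentially linear reason.

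First I would dispose of the easy conditions. Continuity of $\psi$ is immediate since $C([0,2\pi])$ is a vector space. The normalization $\psi(2\pi) - \psi(0) = 2\pi$ follows by linearity: $\psi(2\pi) - \psi(0) = (1-t)(\varphi_0(2\pi) - \varphi_0(0)) + t(\varphi_1(2\pi) - \varphi_1(0)) = (1-t)2\pi + t\,2\pi = 2\pi$. Strict monotonicity is also clear: for $x < y$ we have $\varphi_j(x) < \varphi_j(y)$ for $j = 0,1$, and a nonnegative combination (with at least one strictly positive weight, or handling the endpoint cases $t=0,1$ trivially) of strict inequalities is strict, so $\psi(x) < \psi(y)$.

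The bi-Lipschitz condition requires slightly more care but is still routine. Each $\varphi_j$ is bi-Lipschitz with some constant $L_j$, meaning $\frac{1}{L_j}|x-y| \le |\varphi_j(x)-\varphi_j(y)| \le L_j|x-y|$; since the $\varphi_j$ are increasing, the differences are signed consistently, so for $x < y$ the lower bound reads $\varphi_j(y)-\varphi_j(x) \ge \frac{1}{L_j}(y-x)$ and the upper bound $\varphi_j(y)-\varphi_j(x) \le L_j(y-x)$. Taking the convex combination, the upper Lipschitz bound for $\psi$ holds with constant $\max(L_0,L_1)$, and the lower bound holds with constant $\min(1/L_0, 1/L_1) = 1/\max(L_0,L_1)$, so $\psi$ is bi-Lipschitz.

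The main obstacle, and the only condition needing genuine attention, is the Hilbert transform requirement $\HT(\widetilde{\psi}') \in L^\infty(\R)$. The key point is that $\widetilde{\psi}' = (1-t)\widetilde{\varphi_0}' + t\widetilde{\varphi_1}'$ almost everywhere (differentiation is linear and the extension construction preserves this), and that $\HT$ as defined in Definition \ref{DefinitionHilbertTransformation} is a linear operator in its argument. Hence $\HT(\widetilde{\psi}') = (1-t)\HT(\widetilde{\varphi_0}') + t\HT(\widetilde{\varphi_1}')$, and since $L^\infty(\R)$ is a vector space and both summands lie in $L^\infty(\R)$ by hypothesis, so does their combination. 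The care here lies in justifying the pointwise linearity of $\HT$ despite its definition via a principal-value limit: one invokes Lemma \ref{LemmaExistenceHilbertTransformationIntegral} to guarantee the defining limits for $\widetilde{\varphi_0}'$ and $\widetilde{\varphi_1}'$ exist almost everywhere, and on that full-measure set the $\epsilon$-truncated integrals are genuinely linear in the integrand, so the limit of the combination equals the combination of the limits. Assembling the four verifications establishes convexity, and the concluding sentence notes that convexity in the Banach space $C([0,2\pi])$ yields path-connectedness.
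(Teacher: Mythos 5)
Your proposal is correct and follows essentially the same route as the paper: verify each of the four defining conditions of $\HQC$ (normalization, strict monotonicity, bi--Lipschitz bounds with constant $\max(L_0,L_1)$, and membership of the Hilbert transform in $L^\infty(\R)$) under convex combinations, then conclude path--connectedness from convexity. Your remark justifying the linearity of $\HT$ by first securing almost--everywhere existence of the two individual principal--value limits is in fact slightly more careful than the paper's appeal to ``linearity of improper integrals.''
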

\begin{proof}
Let $\varphi_1, \varphi_2 \in \HQC$, $\lambda \in [0,1]$ and consider the mapping $\lambda \varphi_1 + (1-\lambda) \varphi_2$.
\begin{compactenum}[(i)]
	\item{\uline{\textbf{Monotonicity:}} For $t, t' \in [0,2\pi]$ with $t < t'$, it is
	\[
	\lambda \varphi_1(t) + (1-\lambda) \varphi_2(t) < \lambda \varphi_1(t') + (1-\lambda) \varphi_2(t')
	\]
	due to $\lambda, (1-\lambda) \geq 0$, hence $\lambda \varphi_1 + (1-\lambda) \varphi_2$ is strictly increasing.}
	\item{\uline{\textbf{Bi--Lipschitz property:}} Let $t, t' \in [0,2\pi]$ and $L := \max\{L_1, L_2\}$ with $L_j$ denoting the bi--Lipschitz constant of $\varphi_j$, $j = 1,2$. Then on the one hand, by means of the triangle inequality, it is
	\begin{align*}
	\left| \lambda \varphi_1(t) + (1-\lambda) \varphi_2(t) - \lambda \varphi_1(t') - (1-\lambda) \varphi_2(t') \right| &\leq \lambda \left| \varphi_1(t) - \varphi_1(t') \right| + (1-\lambda) \left| \varphi_2(t) - \varphi_2(t') \right| \\
	&\leq \lambda L |t-t'| + (1 - \lambda) L |t-t'| = L |t-t'|.
	\end{align*}
	Hence $\lambda \varphi_1 + (1-\lambda) \varphi_2$ is Lipschitz--continuous with Lipschitz constant $L$. Without loss of generality, assume $t > t'$, then on the other hand, it is (recall that $\lambda \varphi_1 + (1-\lambda) \varphi_2$ is strictly increasing by (i))
	\begin{align*}
	\lambda \varphi_1(t) + (1-\lambda) \varphi_2(t) - \lambda \varphi_1(t') - (1-\lambda) \varphi_2(t') &= \lambda (\varphi_1(t) - \varphi_1(t')) + (1-\lambda) (\varphi_2(t) - \varphi_2(t') ) \\
	&\geq \lambda \frac{1}{L} (t-t') + (1 - \lambda) \frac{1}{L} (t-t') = \frac{1}{L} (t-t').
	\end{align*}
	Finally, switching the roles of $t$ and $t'$ shows that $\lambda \varphi_1 + (1-\lambda) \varphi_2$ is bi--Lipschitz continuous on $[0, 2\pi]$ with bi--Lipschitz constant $L$.}
	\item{\uline{\textbf{Image interval has length $2 \pi$:}} It is
	\begin{align*}
	&\hspace{0.4cm} \lambda \varphi_1(2\pi) + (1-\lambda) \varphi_2(2\pi) - (\lambda \varphi_1(0) + (1-\lambda) \varphi_2(0)) \\
	&= \lambda (\varphi_1(2\pi) - \varphi_1(0)) + (1-\lambda) (\varphi_2(2\pi) - \varphi_2(0) ) \\
	&= \lambda 2\pi + (1-\lambda) 2\pi = 2 \pi.
	\end{align*}}
	\item{\uline{\textbf{Hilbert transformation:}} First of all, $\lambda \widetilde{\varphi_1} + (1-\lambda) \widetilde{\varphi_2}$ is differentiable almost everywhere in $\R$ by (i) with
	\[
	\left( \lambda \widetilde{\varphi_1} + (1-\lambda) \widetilde{\varphi_2} \right)' = \lambda \widetilde{\varphi_1}' + (1-\lambda) \widetilde{\varphi_2}'.
	\]
	The function $\lambda \varphi_1' + (1-\lambda) \varphi_2'$ is contained in $L^1([0, 2 \pi])$ as the linear combination of such elements. Following Definition \ref{DefinitionHilbertTransformation}, the Hilbert transformation of $\lambda \widetilde{\varphi_1}' + (1-\lambda) \widetilde{\varphi_1}'$ is given by
	\begin{align*}
	\HT(\lambda \widetilde{\varphi_1}' + (1-\lambda) \widetilde{\varphi_2}')(x) &= - \frac{1}{\pi} \int \limits_{0^+}^\pi \frac{\lambda \widetilde{\varphi_1}'(x+t) + (1-\lambda) \widetilde{\varphi_2}'(x+t) - \lambda \widetilde{\varphi_1}'(x-t) - (1-\lambda) \widetilde{\varphi_2}'(x-t)}{2 \tan(t/2)} \, \mathrm{d}t \\
	&= - \frac{1}{\pi} \int \limits_{0^+}^\pi \frac{\lambda (\widetilde{\varphi_1}'(x+t) - \widetilde{\varphi_1}'(x-t)) + (1-\lambda) (\widetilde{\varphi_2}'(x+t) - \widetilde{\varphi_2}'(x-t))}{2 \tan(t/2)} \, \mathrm{d}t .
	\end{align*}
	Since $\varphi_1, \varphi_2 \in \HQC$, it is $\HT(\widetilde{\varphi_1}'), \HT(\widetilde{\varphi_2}') \in L^\infty(\R)$ by definition of the set $\HQC$, thus using the linearity of (improper) integrals the previous equation can be rewritten as
	\begin{align*}
	\HT(\lambda \widetilde{\varphi_1}' + (1-\lambda) \widetilde{\varphi_2}')(x) &= - \frac{\lambda}{\pi} \int \limits_{0^+}^\pi \frac{\widetilde{\varphi_1}'(x+t) - \widetilde{\varphi_2}'(x-t)}{2 \tan(t/2)} \, \mathrm{d}t  - \frac{1 - \lambda}{\pi} \int \limits_{0^+}^\pi \frac{\widetilde{\varphi_2}'(x+t) - \widetilde{\varphi_2}'(x-t)}{2 \tan(t/2)} \, \mathrm{d}t  \\
	&= \lambda \HT(\widetilde{\varphi_1}')(x) + (1-\lambda) \HT(\widetilde{\varphi_2}')(x).
	\end{align*}
	Since $L^\infty(\R)$ is a $\R$--vector space, the previous equation yields $\HT(\lambda \widetilde{\varphi_1}' + (1-\lambda) \widetilde{\varphi_2}') \in L^\infty(\R)$.}
\end{compactenum}
All in all, the mapping $\lambda \varphi_1 + (1-\lambda) \varphi_2$ is contained in $\HQC$ for every $\lambda$, hence $\HQC$ is convex. Thus, as a subset of the normed vector space $C([0, 2 \pi])$, $\HQC$ is also path--connected.
\end{proof}

Continuing the investigation, the set $\HQC$ now gives rise to consider the mapping
\begin{align}
\label{FormulaMappingFofConvexSet}
\Lambda: \HQC \longrightarrow HQ(\D), \; \varphi \longmapsto \left( \D \ni z \longmapsto \Lambda(\varphi)(z) := \PT[e^{i \varphi}](z) \right).
\end{align}
By Corollary \ref{CorollaryCharacterizationHQD} and Pavlovi\'{c}'s Proposition \ref{PropositionPavlovicAbstractHQC}, the mapping $\Lambda$ is surjective. Endowing the involved sets in (\ref{FormulaMappingFofConvexSet}) with the respective metric structures concludes in

\begin{theorem} \hspace{0.1cm} \label{TheoremMappingFisContinuousAndBijective} \newline
The mapping $\Lambda: (\HQC, \dsup) \longrightarrow (HQ(\D), \dsup)$ as defined in (\ref{FormulaMappingFofConvexSet}) is continuous and surjective.
\end{theorem}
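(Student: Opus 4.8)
The surjectivity of $\Lambda$ requires no further work: it is exactly the content of the remark preceding the theorem, namely that Corollary \ref{CorollaryCharacterizationHQD} together with Proposition \ref{PropositionPavlovicAbstractHQC} identifies every element of $HQ(\D)$ as $\PT[e^{i\varphi}]$ for some $\varphi \in \HQC$. Hence the whole task is to establish continuity, and the plan is to prove the stronger statement that $\Lambda$ is non--expansive, i.e. $\dsup(\Lambda(\varphi), \Lambda(\psi)) \leq \dsup(\varphi, \psi)$ for all $\varphi, \psi \in \HQC$; continuity (indeed Lipschitz continuity with constant $1$) then follows immediately.

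To carry this out, I would start from the integral representation in (\ref{FormulaPoissonTransformationHarmonicExtension}): for $z = re^{i\phi} \in \D$,
\[
\Lambda(\varphi)(z) - \Lambda(\psi)(z) = \frac{1}{2\pi} \int_0^{2\pi} \frac{1 - r^2}{1 - 2r\cos(t-\phi) + r^2} \left( e^{i\varphi(t)} - e^{i\psi(t)} \right) \mathrm{d}t.
\]
The two facts I would invoke about the Poisson kernel $P(r, \theta) = (1-r^2)/(1 - 2r\cos\theta + r^2)$ are that it is strictly positive for $r \in [0,1)$ (its denominator equals $(1-r)^2 + 2r(1-\cos\theta) > 0$) and that it has unit mass, $\frac{1}{2\pi}\int_0^{2\pi} P(r, t-\phi)\, \mathrm{d}t = 1$, independently of $r$ and $\phi$. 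Combined with the elementary bound $|e^{i\varphi(t)} - e^{i\psi(t)}| = 2|\sin(\tfrac{\varphi(t) - \psi(t)}{2})| \leq |\varphi(t) - \psi(t)| \leq \dsup(\varphi, \psi)$, pulling the modulus inside the integral and using positivity of the kernel yields $|\Lambda(\varphi)(z) - \Lambda(\psi)(z)| \leq \dsup(\varphi, \psi)$.

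The decisive point --- and the only thing one has to watch --- is that this bound is completely independent of the point $z = re^{i\phi}$, precisely because the normalization of the Poisson kernel holds uniformly in $r$ and $\phi$. Taking the supremum over all $z \in \D$ therefore gives $\dsup(\Lambda(\varphi), \Lambda(\psi)) \leq \dsup(\varphi, \psi)$, which is the desired estimate. I do not anticipate a genuine obstacle here: the argument reduces to the positivity and unit mass of the Poisson kernel together with a single elementary trigonometric inequality, and no appeal to the finer (bi--Lipschitz or Hilbert--transform) structure encoded in $\HQC$ is needed to obtain continuity.
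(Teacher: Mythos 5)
Your proof is correct and follows essentially the same route as the paper: both reduce the estimate to the boundary values and conclude with the elementary inequality $\left| e^{i \varphi(t)} - e^{i \psi(t)} \right| \leq \left| \varphi(t) - \psi(t) \right|$, arriving at the same non--expansiveness bound $\dsup(\Lambda(\varphi), \Lambda(\psi)) \leq \dsup(\varphi, \psi)$. The only cosmetic difference is that you re--derive the reduction to the boundary from the positivity and unit mass of the Poisson kernel, whereas the paper simply applies the Maximum Principle to the harmonic difference $\Lambda(\varphi_n) - \Lambda(\varphi)$.
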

\begin{proof}
The fact that $\Lambda$ is surjective was already mentioned above. Hence, let $(\varphi_n)_{n \in \N}$ converge in $\HQC$ to $\varphi \in \HQC$. The characterization of elements in $HQ(\D)$ stated in Proposition \ref{PropositionPavlovicAbstractHQC} implies that $(\Lambda(\varphi_n))_{n \in \N}$ is a sequence in $HQ(\D)$ and $\Lambda(\varphi) \in HQ(\D)$. In particular, $\Lambda(\varphi_n)$ and $\Lambda(\varphi)$ are harmonic quasiconformal automorphisms of $\D$, continuous on $\overline{\D}$ and coincide with $e^{i \varphi_n}$ and $e^{i \varphi}$ on $\partial \D$, respectively (see also \cite[p. 12]{Duren}). Therefore, since $\Lambda(\varphi_n) - \Lambda(\varphi)$ is harmonic as well, the Maximum Principle for harmonic mappings applies, concluding in
\begin{align*}
\sup \limits_{z \in \D} \left| \Lambda(\varphi_n)(z) - \Lambda(\varphi)(z) \right| &= \sup \limits_{z \in \partial \D} \left| \Lambda(\varphi_n)(z) - \Lambda(\varphi)(z) \right| \\
&= \sup \limits_{t \in [0,2\pi]} \left| e^{i \varphi_n(t)} - e^{i \varphi(t)} \right| \\
&\leq \sup \limits_{t \in [0,2\pi]} \left| \varphi_n(t) - \varphi(t) \right| = \dsup(\varphi_n, \varphi).
\end{align*}
In the estimate, the elementary inequality $|e^{ix} - e^{iy}| \leq |x-y|$ for $x,y \in \R$ was used. The last expression tends to zero for $n \to \infty$, proving the continuity of $\Lambda$.
\end{proof}

Finally, combining the statements of Lemma \ref{LemmaSetIsConvex} and Theorem \ref{TheoremMappingFisContinuousAndBijective} yields

\begin{theorem} \hspace{0.1cm} \newline
The space $HQ(\D)$ is path--connected.
\end{theorem}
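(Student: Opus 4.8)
The plan is to invoke the standard topological principle that the continuous image of a path--connected space is again path--connected, and to realize $HQ(\D)$ as exactly such an image. By Lemma \ref{LemmaSetIsConvex}, the set $\HQC$ is convex, hence path--connected, as a subset of $(C([0,2\pi]), \dsup)$; and by Theorem \ref{TheoremMappingFisContinuousAndBijective}, the mapping $\Lambda \colon (\HQC, \dsup) \longrightarrow (HQ(\D), \dsup)$ is continuous and surjective. Since $\Lambda(\HQC) = HQ(\D)$, path--connectedness transfers from the domain to the image, which is precisely the assertion.

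To make this concrete rather than merely citing the abstract principle, I would join two arbitrary mappings $f_1, f_2 \in HQ(\D)$ by an explicit path. First I would use the surjectivity of $\Lambda$ to select preimages $\varphi_1, \varphi_2 \in \HQC$ with $\Lambda(\varphi_j) = f_j$ for $j = 1,2$. By the convexity established in Lemma \ref{LemmaSetIsConvex}, the segment
\[
\sigma(\lambda) := \lambda \varphi_1 + (1-\lambda) \varphi_2, \qquad \lambda \in [0,1],
\]
lies entirely in $\HQC$, and the map $\lambda \longmapsto \sigma(\lambda)$ is continuous into $(C([0,2\pi]), \dsup)$ since it is affine into a normed space. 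Composing with $\Lambda$ then yields the candidate path
\[
p \colon [0,1] \longrightarrow HQ(\D), \qquad p(\lambda) := \Lambda(\sigma(\lambda)),
\]
whose endpoints are $p(0) = \Lambda(\varphi_2) = f_2$ and $p(1) = \Lambda(\varphi_1) = f_1$.

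The only point left to verify is the continuity of $p$, and this is immediate: $p = \Lambda \circ \sigma$ is the composition of the continuous segment $\sigma$ with the continuous mapping $\Lambda$ of Theorem \ref{TheoremMappingFisContinuousAndBijective}. I do not expect any genuine obstacle at this stage, because all of the analytic content has already been discharged: the stability of the defining properties of $\HQC$ (monotonicity, the bi--Lipschitz bound, the normalization, and the $L^\infty$--membership of the Hilbert transform) under convex combinations was handled in Lemma \ref{LemmaSetIsConvex}, and the continuity estimate for $\Lambda$ via the Maximum Principle was handled in Theorem \ref{TheoremMappingFisContinuousAndBijective}. The present statement is therefore a formal consequence of these two results, and the proof amounts to assembling them.
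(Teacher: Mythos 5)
Your proposal is correct and follows exactly the paper's argument: the paper also obtains path--connectedness of $HQ(\D)$ by combining the convexity (hence path--connectedness) of $\HQC$ from Lemma \ref{LemmaSetIsConvex} with the continuity and surjectivity of $\Lambda$ from Theorem \ref{TheoremMappingFisContinuousAndBijective}, so that $HQ(\D)$ is the continuous image of a path--connected space. Your explicit construction of the path $p = \Lambda \circ \sigma$ is just a concrete unwinding of that same principle.
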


\section{Incompleteness of $HQ(\D)$: Statement, auxiliary results and proof}

This section is concerned with the proof of the following statement:

\begin{theorem} \hspace{0.1cm} \label{TheoremIncompletenessHQG} \newline
The space $HQ(\D)$ is incomplete.
\end{theorem}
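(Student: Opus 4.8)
The plan is to exhibit a Cauchy sequence in $(HQ(\D), \dsup)$ whose uniform limit fails to be quasiconformal, thereby witnessing that $HQ(\D)$ is not complete. Since Theorem \ref{TheoremClosedNotOpen} shows $HQ(\D)$ is closed \emph{in} $Q(\D)$, the limit must necessarily escape $Q(\D)$ itself; the natural candidate for such a limit is a harmonic automorphism of $\D$ that is a homeomorphism but not quasiconformal, exactly the kind of object produced in Example \ref{ExampleHarmonicUnitDiskAutomorphism} and Remark \ref{RemarkExampleNonQCHomeomorphism}. So I would first fix a target boundary map $\gamma = e^{i\psi} \in \mathcal{H}^+(\partial\D)$ whose Poisson extension $\PT[\gamma]$ is a sense-preserving harmonic homeomorphism of $\D$ but \emph{not} quasiconformal, and then approximate $\psi$ from within the class $\HQC$.

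\medskip

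Concretely, I would work through the boundary correspondence using the characterization furnished by Proposition \ref{PropositionPavlovicAbstractHQC} and the set $\HQC$. The idea is to choose a strictly increasing $\psi$ with $\psi(2\pi)-\psi(0)=2\pi$ that is a homeomorphism of $[0,2\pi]$ but violates exactly one of the quasiconformality conditions (for instance, $\psi$ fails to be bi-Lipschitz because $\psi'$ vanishes at a point, or because $\HT(\widetilde\psi')\notin L^\infty$). I would then build a sequence $\varphi_n \in \HQC$ with $\dsup(\varphi_n,\psi)\to 0$; a convenient construction is to truncate or mollify $\psi$ so that each $\varphi_n$ is genuinely bi-Lipschitz with a (necessarily blowing-up) constant and has bounded Hilbert transform of its derivative, while converging uniformly to $\psi$. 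Feeding these through the operator $\Lambda$ of (\ref{FormulaMappingFofConvexSet}) and invoking the Maximum-Principle estimate already established in the proof of Theorem \ref{TheoremMappingFisContinuousAndBijective}, namely
\[
\dsup\bigl(\PT[e^{i\varphi_n}], \PT[e^{i\varphi_m}]\bigr) \le \dsup(\varphi_n,\varphi_m),
\]
shows at once that $(\PT[e^{i\varphi_n}])_n$ is Cauchy in $(HQ(\D),\dsup)$ because $(\varphi_n)_n$ is Cauchy in $C([0,2\pi])$.

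\medskip

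It then remains to identify the limit and confirm it leaves the space. Uniform convergence of $\varphi_n\to\psi$ gives uniform convergence of the boundary data $e^{i\varphi_n}\to e^{i\psi}$, and the Poisson kernel depends continuously on the boundary function, so $\PT[e^{i\varphi_n}]\to\PT[e^{i\psi}]$ uniformly on $\D$; by Proposition \ref{PropositionConvergenceTheorem}(i) the limit is harmonic, and it is a homeomorphism of $\D$ by the Rad\'o--Kneser--Choquet Theorem \ref{PropositionRKC} since $\psi$ induces $\gamma\in\mathcal{H}^+(\partial\D)$. Because $\psi\notin\HQC$, Proposition \ref{PropositionPavlovicAbstractHQC} forces $\PT[e^{i\psi}]$ to be non-quasiconformal, hence $\PT[e^{i\psi}]\in\mathcal{H}^+(\D)\setminus Q(\D)$ and in particular $\notin HQ(\D)$. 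This produces a Cauchy sequence without a limit in $HQ(\D)$, establishing incompleteness.

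\medskip

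The main obstacle I anticipate is the approximation step: one must produce the $\varphi_n\in\HQC$ so that each individually satisfies \emph{all three} conditions of $\HQC$ (strict monotonicity, bi-Lipschitz, and $\HT(\widetilde{\varphi_n}')\in L^\infty$) while $\varphi_n\to\psi$ uniformly. Monotonicity and the normalization $\varphi_n(2\pi)-\varphi_n(0)=2\pi$ are easy to maintain, and bi-Lipschitz bounds (with constants degenerating as $n\to\infty$) can be arranged by construction; the delicate point is controlling the Hilbert transform of $\varphi_n'$, since this is precisely the condition whose failure in the limit is responsible for non-quasiconformality. A clean way around this is to select $\psi$ whose \emph{sole} defect is the loss of the bi-Lipschitz property (e.g. $\psi'$ touching zero), and to take $\varphi_n$ as explicit piecewise-smooth or convex-combination approximants for which $\HT(\widetilde{\varphi_n}')$ is manifestly bounded, so that verifying membership in $\HQC$ reduces to elementary estimates rather than a subtle harmonic-analytic computation.
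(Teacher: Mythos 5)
Your overall architecture coincides with the paper's: approximate a boundary function $\psi$ whose Poisson extension is a harmonic but non--quasiconformal homeomorphism by functions $\varphi_n \in \HQC$, push everything through the Poisson transformation, and use the maximum--principle estimate $\dsup(\PT[e^{i\varphi_n}], \PT[e^{i\varphi_m}]) \le \dsup(\varphi_n, \varphi_m)$ to obtain a Cauchy sequence in $HQ(\D)$ whose uniform limit $\PT[e^{i\psi}]$ lies outside $Q(\D)$. Your preliminary observation that the limit must leave $Q(\D)$ itself (because $HQ(\D)$ is closed in $Q(\D)$ by Theorem \ref{TheoremClosedNotOpen}) is correct, as is the deduction of non--quasiconformality of the limit from $\psi \notin \HQC$ via Proposition \ref{PropositionPavlovicAbstractHQC}. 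Note only that Example \ref{ExampleHarmonicUnitDiskAutomorphism} is not a usable target, since its boundary function is not injective and hence cannot arise as a uniform limit of the circle homeomorphisms $e^{i\varphi_n}$ in the way you need; the target must be a genuine homeomorphism of $\partial\D$ that merely fails Pavlovi\'{c}'s quantitative conditions.

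The genuine gap is that the central object is never produced: you list the desiderata for $\psi$ and for the approximants $\varphi_n$ (\enquote{truncate or mollify}, \enquote{explicit piecewise--smooth approximants}) but do not exhibit either, nor verify membership in $\HQC$ -- and you yourself flag this as the main obstacle. This is precisely where all the work in the paper's proof lives: the target is $\varphi_{\mathcal{C}}(t) = \pi(\mathcal{C}(t/2\pi) + t/2\pi)$ built from the Cantor function, whose Poisson extension is non--quasiconformal by the cited result of Bo\v{z}in--Mateljevi\'{c} (Proposition \ref{PropositionCounterexampleHQC}); the approximants come from the contraction iteration of Lemma \ref{LemmaCantorFunctionApprox} seeded with $\psi_0(x) = 6x^5 - 15x^4 + 10x^3$, chosen so that every iterate is a $C^2$ homeomorphism; and membership of the resulting $\varphi_n$ in $\HQC$ is then checked by hand (the added linear term forces $\varphi_n' \ge \tfrac12$, giving the bi--Lipschitz property, and Lipschitz continuity of $\varphi_n'$ gives $\HT(\widetilde{\varphi_n}') \in L^\infty(\R)$). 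I would add that the fallback you sketch at the end -- take a smooth strictly increasing $\psi$ whose only defect is that $\psi'$ vanishes somewhere, for instance $\psi(t) = t - \sin t$, and approximate by $\varphi_n(t) = (1-\tfrac1n)\psi(t) + \tfrac1n t$, which lie in $\HQC$ since $\varphi_n' \ge \tfrac1n$ and $\HT(\widetilde{\psi}')$ is bounded for smooth periodic data -- is in fact viable and would yield a shorter, self--contained proof not relying on the Bo\v{z}in--Mateljevi\'{c} example. But as written, the proposal asserts rather than establishes the existence of the required sequence, so it is a correct plan rather than a complete proof.
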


In order to prove this claim, some helpful results are collected in the following. The principal idea of the proof of Theorem \ref{TheoremIncompletenessHQG} is to construct a sequence of homeomorphic mappings of the interval $[0,1]$ onto itself converging uniformly to the \textbf{Cantor function} $\mathcal{C}: [0, 1] \longrightarrow [0,1]$; for basic information on this function, see \cite{DovgosheyEtAl} and \cite[Section 2.7, pp. 49--53]{RoydenFitzpatrick}. A result of Bo\v{z}in and Mateljevi\'{c} shows that, via the Poisson transformation, an appropriately modified variant of the mapping $\mathcal{C}$ induces a harmonic homeomorphism of the unit disk $\D$ onto itself which is \textit{not} quasiconformal (see Proposition \ref{PropositionCounterexampleHQC}). However, this harmonic homeomorphism will be seen to arise as the uniform limit of harmonic quasiconformal automorphisms of $\D$, thus implying that $HQ(\D)$ cannot be complete. \newline

First of all, an approximation procedure for the Cantor function $\mathcal{C}$ in terms of a certain recursively defined sequence, which will be of central importance, is stated (see \cite[Proposition 4.2, p. 9]{DovgosheyEtAl}):

\begin{lemma} \hspace{0.1cm} \label{LemmaCantorFunctionApprox} \newline
Let $B([0,1])$ denote the Banach space of bounded real--valued functions on $[0,1]$. The Cantor function $\mathcal{C}$ is the unique element of $B([0,1])$ for which
\[
\mathcal{C}(x) = \begin{cases} \frac{1}{2} \mathcal{C}(3x), & 0 \leq x \leq \frac{1}{3} \\ \frac{1}{2}, & \frac{1}{3} < x < \frac{2}{3} \\ \frac{1}{2} + \frac{1}{2} \mathcal{C}(3x-2), & \frac{2}{3} \leq x \leq 1. \end{cases}
\]
Moreover, for arbitrary $\psi_0 \in B([0,1])$, the sequence $(\psi_n)_{n \in \N_0}$ defined by
\begin{align}
\label{FormulaRecursiveFunctionSequenceDefinition}
\psi_{n+1}(x) := \begin{cases} \frac{1}{2} \psi_n(3x), & 0 \leq x \leq \frac{1}{3} \\ \frac{1}{2}, & \frac{1}{3} < x < \frac{2}{3} \\ \frac{1}{2} + \frac{1}{2} \psi_n(3x-2), & \frac{2}{3} \leq x \leq 1 \end{cases}
\end{align}
for $n \in \N_0$ converges uniformly on $[0,1]$ to $\mathcal{C}$.
\end{lemma}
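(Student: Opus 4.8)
The plan is to recognize the recursion (\ref{FormulaRecursiveFunctionSequenceDefinition}) as the orbit of a single operator and to invoke the Banach Fixed Point Theorem on the complete space $B([0,1])$. Concretely, I would introduce $T: B([0,1]) \longrightarrow B([0,1])$ defined by
\[
(T\psi)(x) := \begin{cases} \frac{1}{2} \psi(3x), & 0 \leq x \leq \frac{1}{3} \\ \frac{1}{2}, & \frac{1}{3} < x < \frac{2}{3} \\ \frac{1}{2} + \frac{1}{2} \psi(3x-2), & \frac{2}{3} \leq x \leq 1, \end{cases}
\]
so that (\ref{FormulaRecursiveFunctionSequenceDefinition}) reads $\psi_{n+1} = T\psi_n$ and the asserted functional equation for $\mathcal{C}$ becomes precisely the fixed--point equation $T\mathcal{C} = \mathcal{C}$. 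The first, routine check is that $T$ is well defined, i.e. that $T\psi$ is again bounded: on each piece $|(T\psi)(x)|$ is bounded by $\frac{1}{2} + \frac{1}{2}\|\psi\|_\infty$, so $\|T\psi\|_\infty \leq \frac{1}{2} + \frac{1}{2}\|\psi\|_\infty < \infty$.

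Next I would establish that $T$ is a contraction in the supremum norm. Given $\psi, \chi \in B([0,1])$, the middle piece contributes nothing to $T\psi - T\chi$ (both values equal $\frac{1}{2}$), while on $[0,\frac{1}{3}]$ the difference equals $\frac{1}{2}\left(\psi(3x) - \chi(3x)\right)$ and on $[\frac{2}{3},1]$ it equals $\frac{1}{2}\left(\psi(3x-2) - \chi(3x-2)\right)$. Since $x \mapsto 3x$ and $x \mapsto 3x-2$ carry the respective subintervals onto $[0,1]$, each contribution is bounded in absolute value by $\frac{1}{2}\|\psi - \chi\|_\infty$, whence
\[
\|T\psi - T\chi\|_\infty \leq \tfrac{1}{2}\,\|\psi - \chi\|_\infty .
\]
Thus $T$ is a contraction with constant $\frac{1}{2}$ on the Banach space $B([0,1])$.

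With this in hand, the Banach Fixed Point Theorem yields a unique fixed point $\psi^\ast \in B([0,1])$ of $T$ and guarantees that for every starting function $\psi_0 \in B([0,1])$ the iterates $\psi_n = T^n\psi_0$ converge to $\psi^\ast$ in the norm of $B([0,1])$, i.e. uniformly on $[0,1]$. It then remains only to identify $\psi^\ast$ with the Cantor function, and this is the single place where genuine input about $\mathcal{C}$ is required: one must verify that $\mathcal{C}$ satisfies the self--similarity relation $T\mathcal{C} = \mathcal{C}$. This follows from the standard construction of $\mathcal{C}$ (the restriction to $[0,\frac{1}{3}]$ is a rescaled copy of $\mathcal{C}$, the value is constantly $\frac{1}{2}$ on the middle third, and the restriction to $[\frac{2}{3},1]$ is a rescaled and shifted copy), and by uniqueness of the fixed point it forces $\psi^\ast = \mathcal{C}$. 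This simultaneously gives the characterization of $\mathcal{C}$ and the uniform convergence $\psi_n \to \mathcal{C}$. The only genuinely delicate step — to be treated with care rather than as a routine estimate — is the confirmation of the functional equation for $\mathcal{C}$ itself; everything else is a direct application of the contraction principle.
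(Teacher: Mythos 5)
Your proposal is correct and matches the route the paper itself indicates: the paper does not prove Lemma \ref{LemmaCantorFunctionApprox} in detail but cites it from the literature and explicitly remarks that the uniqueness and approximation statements rest on Banach's Contraction Principle, which is precisely the argument you carry out (the operator $T$ is a $\tfrac{1}{2}$--contraction on the complete space $B([0,1])$, and the self--similarity relation identifies the unique fixed point as $\mathcal{C}$). No gaps; your write--up simply supplies the details the paper delegates to the reference.
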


An approximation of the Cantor function using the recursively defined sequence given by (\ref{FormulaRecursiveFunctionSequenceDefinition}) is shown in Figure \ref{FigureApproximationCantorFunction}. Basically, the principal idea of the approximation procedure and the mappings $\psi_n$ is that the initial mapping $\psi_0$ is \enquote{copied} and gets \enquote{duplicated in a scaled fashion}, being added to the graph of $\psi_n$ more and more times as the index increases. This is visualized by the right--hand picture in Figure \ref{FigureApproximationCantorFunction}: In the first step (in blue), the scaled initial mapping $\psi_0$ can be seen two times, namely on the intervals $[0, \frac{1}{3}]$ and $[\frac{2}{3}, 1]$. After the second iteration (in orange), the mapping $\psi_0$ appears four times in a scaled manner. Finally, in the third step (in yellow), the appropriately scaled version of $\psi_0$ is present eight times. In particular, it becomes obvious that all continuity and differentiability questions regarding $\psi_n$ depend solely on the behaviour of the initial mapping $\psi_0$ (and eventually existing derivatives) at the boundary points $x = 0$ and $x = 1$ of the starting interval.

\begin{figure}
\centering
\includegraphics[width=16cm]{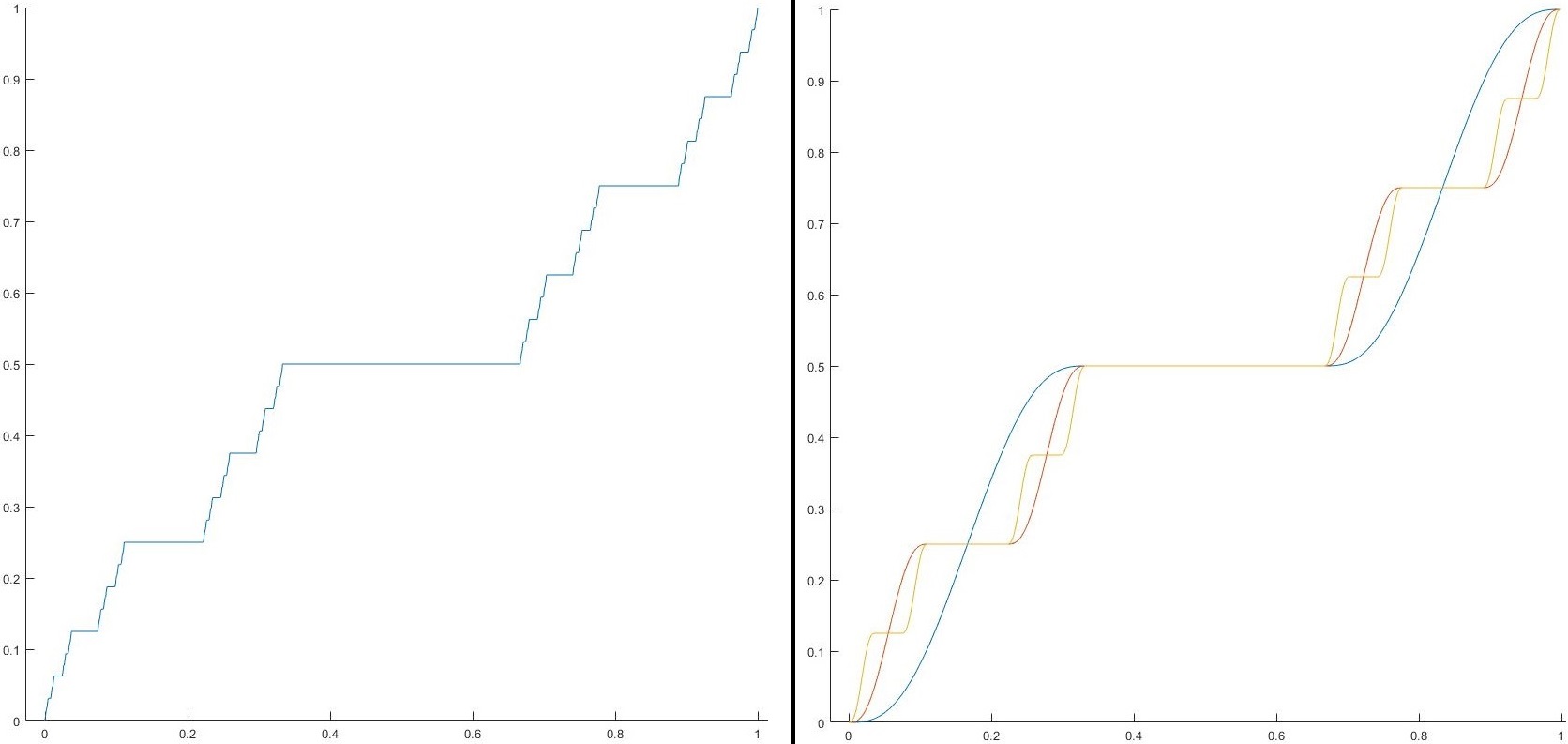}
\caption[An approximation of the Cantor function $\mathcal{C}$ together with the first three approximating functions.]{Left: An approximation of the Cantor function $\mathcal{C}$ using the recursively defined sequence $(\psi_n)_n$ described in Lemma \ref{LemmaCantorFunctionApprox}. The initial function is given by $\psi_0(x) = 6x^5 - 15x^4 + 10x^3$ for $x \in [0,1]$, and for the approximation, the index value $n = 15$ was chosen. Right: The first three function in the approximation sequence $(\psi_n)_n$: $\psi_1$ in blue, $\psi_2$ in orange and $\psi_3$ in yellow.}
\label{FigureApproximationCantorFunction}
\end{figure}

Furthermore, in Lemma \ref{LemmaCantorFunctionApprox}, the stated approximation part and the related uniqueness of $\mathcal{C}$ is based on Banach's Contraction Principle (see \cite[p. 216]{RoydenFitzpatrick}). The following Proposition contains the mentioned result of Bo\v{z}in/Mateljevi\'{c} concerning a harmonic homeomorphism of $\D$ which fails to be quasiconformal (see \cite[Example 3.2, pp. 29--30]{BozinMateljevic}):

\begin{proposition} \hspace{0.1cm} \label{PropositionCounterexampleHQC} \newline
For $t \in [0,2 \pi]$, define $\varphi_{\mathcal{C}}(t) := \pi (\mathcal{C}(\frac{t}{2 \pi}) + \frac{t}{2 \pi})$ and $\gamma_{\mathcal{C}}(t) := e^{i \varphi_{\mathcal{C}}(t)}$. Then the function $h_{\mathcal{C}} := \PT[\gamma_{\mathcal{C}}]$ is a harmonic homeomorphism of $\D$ onto itself that is not quasiconformal.
\end{proposition}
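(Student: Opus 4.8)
The plan is to split the statement into two independent claims: first, that $h_{\mathcal{C}}$ is a harmonic homeomorphism of $\D$ onto itself, which I would get directly from the Rad\'o--Kneser--Choquet Theorem; and second, that $h_{\mathcal{C}}$ is \emph{not} quasiconformal, which I would deduce from Pavlovi\'c's characterization in Proposition \ref{PropositionPavlovicAbstractHQC} by showing that the representing boundary argument $\varphi_{\mathcal{C}}$ fails to be bi--Lipschitz.

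For the first claim I would analyze the boundary function. Since the Cantor function $\mathcal{C}$ is continuous and non--decreasing with $\mathcal{C}(0)=0$ and $\mathcal{C}(1)=1$, the map $t \mapsto \varphi_{\mathcal{C}}(t) = \pi(\mathcal{C}(t/2\pi) + t/2\pi)$ is continuous and, being the sum of a non--decreasing term and the strictly increasing term $t/2$, is \emph{strictly} increasing on $[0,2\pi]$; moreover $\varphi_{\mathcal{C}}(0)=0$ and $\varphi_{\mathcal{C}}(2\pi)=2\pi$. Hence $\gamma_{\mathcal{C}} = e^{i\varphi_{\mathcal{C}}}$ is an orientation--preserving homeomorphism of $\partial\D$ onto itself, in particular a weak homeomorphism. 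Applying Proposition \ref{PropositionRKC} with the convex Jordan domain $G=\D$ then shows that $h_{\mathcal{C}} = \PT[\gamma_{\mathcal{C}}]$ is an injective harmonic mapping of $\D$ onto $\D$; since a continuous injective self--map of the planar domain $\D$ is open by invariance of domain, $h_{\mathcal{C}}$ is in fact an orientation--preserving harmonic homeomorphism of $\D$ onto itself.

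For the second claim I would argue by contradiction using Proposition \ref{PropositionPavlovicAbstractHQC}. Suppose $h_{\mathcal{C}}$ were quasiconformal. As $h_{\mathcal{C}}$ is an orientation--preserving harmonic homeomorphism with boundary values $\gamma_{\mathcal{C}} = e^{i\varphi_{\mathcal{C}}}$, the uniqueness part of Proposition \ref{PropositionRKC} forces the function $\varphi$ appearing in condition (ii) of Proposition \ref{PropositionPavlovicAbstractHQC} to agree with $\varphi_{\mathcal{C}}$ up to an additive constant in $2\pi\Z$ (both are continuous, strictly increasing lifts of the same circle map, hence differ by a constant). Since the bi--Lipschitz property is invariant under such a shift, $\varphi_{\mathcal{C}}$ itself would have to be bi--Lipschitz. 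I would then rule this out: taking $t_n := 2\pi \cdot 3^{-n}$ and using $\mathcal{C}(3^{-n}) = 2^{-n}$ (an immediate consequence of the recursion in Lemma \ref{LemmaCantorFunctionApprox}), the difference quotient
\[
\frac{\varphi_{\mathcal{C}}(t_n) - \varphi_{\mathcal{C}}(0)}{t_n - 0} = \frac{\pi\, 2^{-n} + \pi\, 3^{-n}}{2\pi \cdot 3^{-n}} = \frac{1}{2}\left( \left( \tfrac{3}{2} \right)^{\! n} + 1 \right)
\]
tends to $+\infty$ as $n \to \infty$, so $\varphi_{\mathcal{C}}$ is not even Lipschitz, let alone bi--Lipschitz. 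This contradicts condition (ii)(b), so $h_{\mathcal{C}}$ cannot be quasiconformal; note that violating a single one of Pavlovi\'c's three conditions suffices, so the Hilbert--transform requirement (ii)(c) need not be examined.

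The routine parts are the monotonicity and boundary--value bookkeeping for $\varphi_{\mathcal{C}}$ and the scaling estimate for $\mathcal{C}$. The main obstacle I anticipate is the logical linkage in the non--quasiconformality step: one must make sure that the representing argument supplied by Pavlovi\'c's theorem is genuinely $\varphi_{\mathcal{C}}$ (and not some other, better--behaved lift), which requires invoking the uniqueness of the harmonic extension together with the fact that a continuous lift of an $S^1$--valued map is determined up to a constant in $2\pi\Z$. Once this identification is secured, the unbounded difference quotients of the Cantor function complete the argument.
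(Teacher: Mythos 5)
Your argument is correct, but note that the paper does not prove this proposition at all: it is imported verbatim from Bo\v{z}in and Mateljevi\'{c} \cite[Example 3.2]{BozinMateljevic}, so there is no in--paper proof to match. What you have supplied is a self--contained derivation from the two results the paper already quotes: the Rad\'{o}--Kneser--Choquet Theorem (Proposition \ref{PropositionRKC}) for the harmonic--homeomorphism part, and Pavlovi\'{c}'s characterization (Proposition \ref{PropositionPavlovicAbstractHQC}) for the failure of quasiconformality. The two delicate points are both handled properly: you justify that the representing argument $\varphi$ in Pavlovi\'{c}'s condition (ii) must coincide with $\varphi_{\mathcal{C}}$ up to an additive constant in $2\pi\Z$ (via uniqueness of boundary values of the Poisson extension plus the standard lifting argument), and you verify the scaling identity $\mathcal{C}(3^{-n})=2^{-n}$ from the self--similarity recursion, which makes the difference quotients $\tfrac{1}{2}\bigl((3/2)^n+1\bigr)$ at $t=0$ blow up and kills the Lipschitz (hence bi--Lipschitz) property. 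This route is arguably cleaner than chasing the original counterexample construction, since violating condition (ii)(b) alone suffices and the Hilbert--transform condition (ii)(c) never needs to be touched; the price is that it leans on the full strength of Pavlovi\'{c}'s theorem, whereas the cited source argues more directly. Either way, the proposal is a complete and correct proof of the statement.
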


Now all preparations are made in order to prove the claim of Theorem \ref{TheoremIncompletenessHQG}:

\begin{proof}[Proof of Theorem \ref{TheoremIncompletenessHQG}]
Consider the polynomial function
\[
\psi_0: [0,1] \longrightarrow \R, \, x \longmapsto \psi_0(x) := 6x^5-15x^4+10x^3
\]
whose first and second derivatives satisfy
\begin{align}
\psi_0'(0) = \psi_0'(1) = 0 = \psi_0''(0) = \psi_0''(1).
\label{FormulaValuesDerivativesPsi}
\end{align}
Furthermore, $\psi_0$ is strictly increasing on $(0,1)$ and leaves the boundary points fixed -- in other words, $\psi_0$ maps $[0,1]$ homeomorphically onto itself. Lemma \ref{LemmaCantorFunctionApprox} implies that the corresponding sequence $(\psi_n)_{n \in \N_0}$ defined via (\ref{FormulaRecursiveFunctionSequenceDefinition}) converges uniformly on $[0,1]$ to the Cantor function $\mathcal{C}$, and by construction, it is $\psi_n \in C^2([0,1])$ for every $n \in \N_0$ due to (\ref{FormulaValuesDerivativesPsi}). Transferring the $\psi_n$ to the interval $[0, 2\pi]$ via
\begin{align}
\varphi_n(t) := \pi \left( \psi_n \left( \frac{t}{2 \pi} \right) + \frac{t}{2 \pi} \right), \; t \in [0, 2 \pi],
\label{FormulaDefinitionVarphiMappings}
\end{align}
yields a sequence $(\varphi_n)_n$ of $C^2$--homeomorphism of $[0,2 \pi]$ onto itself. Accordingly, this sequence $(\varphi_n)_n$ clearly converges uniformly on $[0,2\pi]$ to the mapping $\varphi_{\mathcal{C}}$ defined in Proposition \ref{PropositionCounterexampleHQC}. As a next step, the mappings $\varphi_n$ and $\varphi_{\mathcal{C}}$ are extended to all of $\R$ by setting
\begin{align}
\varphi_n(t + 2 k \pi) := \varphi_n(t) + 2 k \pi
\label{FormulaDefinitionVarphiMappingsExtension}
\end{align}
for $k \in \Z$ and $t \in [0, 2 \pi]$, yielding a sequence $(\varphi_n)_{n \in \N_0} \subseteq C^2(\R)$; likewise, the mappings $\psi_n$ and $\mathcal{C}$ are extended in the same manner (the extended mappings are denoted by the same letter). In particular, the $\varphi_n$ are differentiable with $\varphi'_n(t + 2 k \pi) = \varphi'_n(t)$ for all $t \in \R$ by construction, i.e. the $\varphi'_n$ (and thus the $\varphi''_n$ as well) are continuous $2 \pi$--periodic mappings. Lifting these mappings to the unit circle by
\[
\gamma_n(e^{it}) := e^{i \varphi_n(t)}
\]
for $t \in [0, 2 \pi]$ and each $n \in \N$ yields orientation--preserving homeomorphisms of $\partial \D$ onto itself, hence the harmonic extensions $\PT[\gamma_n]$ by means of the Rad\'{o}--Kneser--Choquet Theorem \ref{PropositionRKC} are (orientation--preserving) harmonic homeomorphisms of $\D$ onto itself. In order to visualize this procedure and to illustrate a concrete mapping of the described type, the mapping behaviour of the harmonic unit disk automorphism $\PT[\gamma_4]$ is visualized in Figure \ref{FigureHarmonicQCAutomorphismUnitDisk}.

\begin{figure}
\centering
\includegraphics[width=16cm]{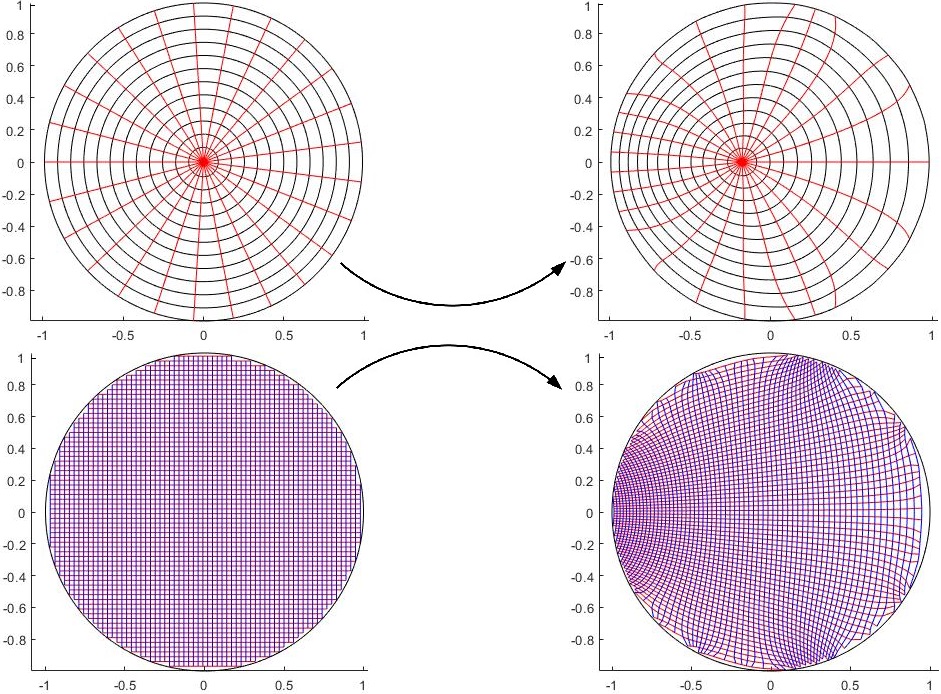}
\caption[An approximation of the harmonic quasiconformal unit disk automorphism $\PT{[\gamma_4]}$.]{Left--hand side: Preimage of concentric circles and radial rays (top) and of an Euclidean grid (bottom) in the unit disk. Right--hand side: Image of the concentric circles, radial rays and the Euclidean grid in $\D$ under the mapping $\PT[\gamma_4]$.}
\label{FigureHarmonicQCAutomorphismUnitDisk}
\end{figure}

Moreover, by Pavlovi\'{c}'s characterization result stated in Proposition \ref{PropositionPavlovicAbstractHQC}, the mappings $\PT[\gamma_n]$ in fact define quasiconformal automorphisms of $\D$, which can be seen as follows: \newline

It is $\varphi_n \in C^2(\R)$ strictly increasing with $\varphi_n(t + 2 \pi) = \varphi_n(t) + 2 \pi$ for all $t \in \R$ by construction, see (\ref{FormulaDefinitionVarphiMappings}) and (\ref{FormulaDefinitionVarphiMappingsExtension}). Furthermore, as $C^2$--homeomorphisms, each mapping $\varphi_n$ is Lipschitz--continuous, and the corresponding inverse mappings $\varphi_n^{-1}$ are also $C^2$ by construction due to (\ref{FormulaValuesDerivativesPsi}), thus also Lipschitz--continuous. In consequence, the mappings $\varphi_n$ are bi--Lipschitz. Hence, in view of Proposition \ref{PropositionPavlovicAbstractHQC}(ii), the Hilbert transformation condition (c) needs to be verified. Therefore, let $x \in \R$, then it is
\[
\left| \varphi'_n(x + t) - \varphi'_n(x - t) \right| \leq L_n \cdot | x + t - (x - t)| = 2 L_n |t|,
\]
since $\varphi_n \in C^2(\R)$, thus $\varphi'_n$ is Lipschitz--continuous on $\R$ with Lipschitz constant $L_n \in \R^+$. This yields
\begin{align*}
\left| \int \limits_{0^+}^\pi \frac{\varphi_n'(x+t) - \varphi_n'(x-t)}{t} \mathrm{d}t \right| &\leq \int \limits_{0^+}^\pi \frac{\left| \varphi_n'(x+t) - \varphi_n'(x-t) \right|}{t} \mathrm{d}t \leq \int \limits_{0^+}^\pi \frac{2 L_n t}{t} \mathrm{d}t = 2 \pi L_n < + \infty,
\end{align*}
and now Remark \ref{RemarkHilbertTransformation}(i) implies that $\HT(\varphi'_n)$ is (essentially) bounded for $\varphi_n, n \in \N_0$ (note that the conclusion could also have been drawn from Lemma \ref{LemmaExistenceHilbertTransformationIntegral} since $\varphi'_n$ and $\varphi''_n$ are periodic and continuous on $\R$). Thus Proposition \ref{PropositionPavlovicAbstractHQC} shows that the mappings $\PT[\gamma_n]$ are quasiconformal automorphisms of $\D$. \newline

Finally, it will be shown that the mappings $\PT[\gamma_n]$ converge uniformly on $\D$ to the non--quasiconformal mapping $h_{\mathcal{C}}$ in question (from Proposition \ref{PropositionCounterexampleHQC}), which is essentially based on the same idea as the proof of Theorem \ref{TheoremMappingFisContinuousAndBijective}: Applying the Maximum Principle for harmonic functions to $\PT[\gamma_n] - h_{\mathcal{C}}$ yields
\begin{align*}
\sup \limits_{z \in \D} \left| \PT[\gamma_n](z) - h_{\mathcal{C}}(z) \right| &= \max \limits_{z \in \partial \D} \left| \PT[\gamma_n](z) - \PT[\gamma_{\mathcal{C}}](z) \right| = \max \limits_{t \in [0, 2\pi]} \left| e^{i \varphi_n(t)} - e^{i \varphi_{\mathcal{C}}(t)} \right| \leq \max \limits_{t \in [0, 2\pi]} \left| \varphi_n(t) - \varphi_{\mathcal{C}}(t) \right|.
\end{align*}
Since $\varphi_n$ converges uniformly on $[0, 2 \pi]$ to $\varphi$, the claim follows: The sequence $(\PT[\gamma_n])_{n \in \N_0}$ in $HQ(\D)$ converges uniformly to $h_{\mathcal{C}} \not \in HQ(\D)$, showing that the space $HQ(\D)$ is incomplete.
\end{proof}

%

\end{document}